\documentclass[11pt]{article}
\usepackage{amsthm}
\usepackage{amssymb}
\usepackage{amsmath}
\usepackage{amsfonts}
\usepackage{float}
\usepackage{xcolor}

\usepackage{hyperref}
%\usepackage{mathabx} 
%\usepackage{graphicx}
%\usepackage{caption}
%\usepackage{subcaption}
%\usepackage{tikz}

%\input tikz.tex
%\usetikzlibrary{cd}

%%%%%%%%%%%%%%%%%%%%%%%%%%%%%%%%%
% Inicialitzacio de comptadors. %
%%%%%%%%%%%%%%%%%%%%%%%%%%%%%%%%%

\setcounter{page}{1} \setcounter{section}{0}
\setcounter{subsection}{0}

%%%%%%%%%%%%%%%%%%%%%%%%
% Diseny de la pagina. %
%%%%%%%%%%%%%%%%%%%%%%%%

%\DeclareMathOperator{\deg}{deg}

\setlength{\topmargin}{-.2cm} \setlength{\oddsidemargin}{-.0cm}
\setlength{\textheight}{8.6 in} \setlength{\textwidth}{6 in}

  % <-- espai interlinear.

%%%%%%%%%%%%%%%%%%%%%%%%%%%%%%%%%%%%
% Nous comandaments per enunciats. %
%%%%%%%%%%%%%%%%%%%%%%%%%%%%%%%%%%%%

\newcounter{contador}
\newcounter{teoA}

\newtheorem{teoa}[teoA]{Theorem}

\newtheorem{propo}[contador]{Proposition}
\newtheorem{teo}[contador]{Theorem}
\newtheorem{lem}[contador]{Lemma}

\theoremstyle{remark}

\newcounter{ex}

%\newcounter{exaa}
%\newenvironment{example}[1][]{\refstepcounter{exaa}\par\medskip
%   \noindent \textbf{Example~\Roman{exaa}.} \rmfamily}{\medskip}

%%%%%%%%%%%%%%%%%%%%%%%%%%%%%%
% Nous comandaments d'estil. %
%%%%%%%%%%%%%%%%%%%%%%%%%%%%%%

 % NOVA P+GINA
 % NOVA L-NIA
 % UNA MICA D'ESPAI
  % UNA MICA M+S D'ESPAI
    % NO INDENTAR DESPR+S D'UN PAR+GRAF
  % DEMOSTRACI\"{E}
 % FINAL D'UNA DEMOSTRACI\"{E}

 % FORMULES GRANS

%%%%%%%%%%%%%%%%%%%%%%%%%%%%%%
% Nous operadors matematics. %
%%%%%%%%%%%%%%%%%%%%%%%%%%%%%%

 % PER SER UTILITZAR EN UNA F\^{E}RMULA

 % PER SER UTILITZAR EN UNA F\^{E}RMULA

%\newcommand{\bx}{{\mathbf x}}

%%%%%%%%%%%%%%%%%%%%%%%%%%%%%%%%%%%%
% Comandes "Blackboard".           %
%%%%%%%%%%%%%%%%%%%%%%%%%%%%%%%%%%%%

\newcommand{\R}{{\mathbb R}}

\newcommand{\N}{{\mathbb N}}

\newcommand{\Z}{{\mathbb Z}}
%%%%%%%%%%%%%%%%%%%%%%%%%%%%%%%%%%%%%%%%%
% Comandes caligr\'{O}fiques per a entorns. %
%%%%%%%%%%%%%%%%%%%%%%%%%%%%%%%%%%%%%%%%%

%\newcommand{\1}[1]{\mathds{1}\left[#1\right]}

%\usepackage{refcheck}

%%%%%%%%%%%%%%%%%%%%%%%%%%%%%%%%%%%%
% En cas d'article: Titol i autor. %
%%%%%%%%%%%%%%%%%%%%%%%%%%%%%%%%%%%%

\title{On two families of iterative methods without memory}

\author{Anna Cima$^{(1)}$, Armengol Gasull$^{(1)}$, V\'{\i}ctor Ma\~{n}osa$^{(2)}$ and Francesc Ma\~{n}osas$^{(1)}$
	\\*[.1truecm]
	{\small \textsl{$^{(1)}$ Departament de Matem\`{a}tiques, Facultat
			de Ci\`{e}ncies,}}
	\\*[-.25truecm] {\small \textsl{Universitat Aut\`{o}noma de Barcelona,}}
	\\*[-.25truecm] {\small \textsl{08193 Bellaterra, Barcelona,
			Spain}}
	\\*[-.25truecm] {\small \textsl{anna.cima@uab.cat,
			armengol.gasull@uab.cat, francesc.manosas@uab.cat}}\\
	\\*[-.25truecm] {\small \textsl{$^{(2)}$ Departament de Matem\`{a}tiques (MAT),}}
	\\*[-.25truecm] {\small \textsl{Institut de Matem\`{a}tiques de la UPC-BarcelonaTech (IMTech),}}           
	\\*[-.25truecm] {\small \textsl{Universitat Polit\`{e}cnica de Catalunya (UPC)}}
	\\*[-.25truecm] {\small \textsl{Colom 11, 08222 Terrassa, Spain}}
	\\*[-.25truecm] {\small \textsl{victor.manosa@upc.edu}}}

%\date{}

\date{\today}

\begin{document}

% ********************** EN CAS D'ARTICLE *********************
\maketitle
\begin{abstract}  
We study two natural families of methods of order  $n\ge 2$ that are useful for solving numerically one variable equations	 $f(x)=0.$  The first family consists on the methods that depend on $x,f(x)$ and its successive derivatives up to $f^{(n-1)}(x)$ and the second family comprises methods that depend on $x,g(x)$  until  $g^{\circ n}(x),$ where $g^{\circ m}(x)=g(g^{\circ (m-1)}(x))$ and $g(x)=f(x)+x$. The first family includes the well-known Newton, Chebyshev, and Halley methods, while the second one contains the Steffensen method. Although the results for the first type of methods are well known and classical, we provide new, simple, detailed, and self-contained proofs.

\end{abstract}

%% *************************************************************
%
%

\noindent {\sl  Mathematics Subject Classification:} 65H05, 58C30. 

\noindent {\sl Keywords:} Newton method, Chebyshev method, Halley method, Steffensen method, iterative methods, efficiency, zeros of real functions.

\section{Introduction and main results}

Let $f(x)=0$ be a real equation  and let $s\in\R$ be one of its solutions, i.\,e. such that $f(s)=0.$ It is said that  $h$ provides a {\it suitable iterative method for finding $s$} if for any $x_0,$ close enough to $s,$ it holds that the sequence
$x_{m+1}=h(x_m),$  $m \ge0,$ is such that $\lim_{m\to\infty}x_m=s.$ 
Moreover, it is said that this method $h$ \emph{ has order $p>1,$ with $p\in\R,$ towards $s$} if
\[
\lim_{m\to\infty}\frac{|x_{m+1}-s|}{|x_m-s|^p}=K\ne0.
\]
If in the above limit $K\ge 0$ then people say that  $h$ gives a method {\it   of order at least $p$ towards~$s$.} When $h$ is of class $\mathcal{C}^n, n\ge2,$ at $s$ it is well known than when it holds that  
\[
h(s)=s,\, h'(s)=h''(s)=\cdots=h^{(n-1)}(s)=0\quad\mbox{and}\quad h^{(n)}(s)\ne0,
\]
the method associated to this $h$ has order $p=n$ towards $s,$ see for instance \cite{SB}. This type of methods are called {\it without memory} because each $x_{m+1}$ depends only on the previous $x_m.$ There are methods with memory where $x_{m+1}$ is computed from some values $x_n$ with $n\le m.$ A paradigmatic method of this type is the celebrated Secant method, where $x_{m+1}$ is computed by using $x_m$ and $x_{m-1}.$ In this paper we only will consider methods without memory.

It is clear that a method of order $p_1,$ with $p_1>p_2>0,$  uses less steps for finding $s$ with a given accuracy that another one of order $p_2.$ Moreover, among several methods of a given order, the  one that needs less number of steps is the one that has associated the smaller $K,$ because for $n$ big enough $|x_{n+1}-s|\approx K|x_n-s|^p.$    
Nevertheless, instead of focusing  on the corresponding constants $K,$ usually, when comparing methods, people introduce the so called {\it efficiency,}  $E.$  The higher the efficiency, the better the method. 

This efficiency $E,$ takes into
account not only the order of convergence $p$ of the method  but also the time
$t$ needed for computing $x_{m+1}$ from $x_m.$  There are several ways of introducing it. For more details see for instance the papers~\cite{KT,KT2,tra0,tra2,var0}, the books~\cite{R} and  \cite[p. 12] {tra}, or  Section~\ref{se:eff}. A first way is called {\it informational efficiency} and it is introduced as $E=p/t$ and a second way
it is named {\it efficiency index} and it is $E=p^{1/t}.$ Notice that the value of $K$  is not taken into account in any of these definitions.

We will use the second one and we simply will call {\it efficiency}. We explain the main motivations for our choice: the function $p^{1/t}$ is simple, increasing  with respect to $p$ and decreasing with respect to $t$ and, more importantly, it satisfies  $E(p,t)=E(p^k, tk)$ for all $k\in\N.$  Let us interpret this equality. Consider a new method $H=h^{\circ k},$ where   $h^{\circ m}(x)=h(h^{\circ (m-1)}(x)).$  If $h$ has order $p$ and the time consumed in each iteration is $t$, it is clear that $H$ has order $p^k$ and  the time consumed in each iteration is $kt.$ Both methods should have the same efficiency because $H$ simply provides a subsequence of $h$ and this is precisely what the equality shows.

Hence, when comparing two methods of the same order the only factor needed to compare them is  the time $t$ used to perform an iteration, or in a few words the ``complexity'' of the map $h.$  Moreover we will not consider each equation individually. Rather we will take into account all  equations $f(x)=0$ having a simple zero $s$ and consider only methods $h_f$ of a fixed integer order and satisfying certain constrains. As we will 
discuss in Section~\ref{se:eff} there are different ways to estimate $t$ when studying the efficiency of the methods.

In this paper, fixed any \emph{integer} order  $p=n\ge 2,$ we will consider two different classes of methods. In a few words they are:
\begin{itemize}
	\item  {\bf Class I:\,} Methods that depend on $x,f(x)$ and its successive derivatives  until $f^{(n-1)}(x).$ 
	\item   {\bf Class II:\,}  Methods that depend on $x,g(x)$  until  $g^{\circ n}(x),$ where $g(x)=f(x)+x.$ 
\end{itemize}

Let us comment on an underlying common idea to construct examples of any order in both classes of methods. If $f(s)=0$ and $s$ is a simple zero then $s=f^{-1}(0).$ In Class I we approach $f^{-1}(y)$ by computing its Taylor polynomial of degree $n-1$ at a suitable point, while in Class II we approach $f^{-1}(y)$ by  its interpolation  polynomial of degree $n-1$ on an adequate sequence of points. Later we will develop these ideas   in more detail.

It is also important to remark that while methods of Class I only work for equations $f(x)=0,$ with $f$ having at least $n-1$ derivatives,  the methods of Class II can also be applied to equations given by continuous equations, $f(x)=0.$ 

 Inside the first class there are the celebrated Newton method ($n=2$) and the Chebyshev and Halley methods  ($n=3$), while inside the second one, there is the  Steffensen method ($n=2$). For completeness we show their well known  associated functions $h$:
 \begin{align*}
 	&h_{\mbox{\scriptsize New}}(x)=x-\frac{f(x)}{f'(x)}, && h_{\mbox{\scriptsize Che}}(x)=x-\frac{f(x)}{f'(x)}  -\frac{(f(x))^2f''(x)}{2 (f'(x))^3}, \\ &h_{\mbox{\scriptsize Hal}}(x)=x- \frac{2f(x)f'(x)}{2 (f'(x))^2 -f(x)f''(x)}, &&
 	h_{\mbox{\scriptsize Ste}}(x)=x-\frac{(g(x)-x)^2}{g(g(x))-2g(x)+x}.
 \end{align*}

This is our first main result.

\begin{teoa}\label{thm:a}  Let $s$ be a simple solution of an equation $f(x)=0,$ with $f:\R\to\R$  of class~$\mathcal{C}^{n}$. Consider the so called  canonical method $x\to H_f^n(x)$ where
\begin{align}\label{class1}
	H_f^n(x)&= x+\sum_{i=1}^{n-1}\frac{(-1)^i}{i!}(f(x))^i(f^{-1})^{(i)}(f(x))\nonumber\\&=:H^n(x,f(x),f'(x), f''(x), \ldots, f^{(n-1)}(x)).
\end{align}
The following holds:

\begin{enumerate}
	\item[(i)] The function $H^n:\R^{n+1}\to\R$ is a rational function with rational coefficients.

	\item [(ii)] The rational function $H^n$ is a polynomial function $P^n$ with integer coefficients in the new $n$ variables 
	\begin{equation}\label{eq:var}
		y_1=x,\,\, y_2=\frac{f(x)}{f'(x)}\mbox{ and, when } n\ge3,\,  y_j=\frac{f^{(j-1)}(x)}{(j-1)!f'(x)}\, \mbox{ for }\, j=3,4,\ldots, n,
	\end{equation}
	that is
	\[
	H_f^n(x)= H^n(x,f(x),f'(x), f''(x), \ldots, f^{(n-1)}(x))= P^n(y_1,y_2,\ldots,y_{n}),
	\]
	for some $P^n\in \Z[y_1,y_2,\ldots,y_{n}]$  of degree $2n - 3$, with degree $n - 1 $ in the variable $y_2$.

		\item [(iii)] If $f$  is of class $\mathcal{C}^{2n-2}$ the canonical method has order at least $n$ towards $s$ and moreover, for most functions $f$ (the ones satisfying $(f^{-1})^{(n)}(0)\ne0$) its order is exactly $n.$

	\item [(iv)] Any method of order at least $n$ towards $s$ writes as
	\begin{multline*}
	H^n(x,f(x),f'(x), f''(x), \ldots, f^{(n-1)}(x))\\+ \big(f(x))^n A(x,f(x),f'(x), f''(x), \ldots, f^{(n-1)}(x))
	\end{multline*}
	where $A:\R^{n+1}\to \R$ is a continuous function  such that $x\to x_2^nA(x_1,x_2,\ldots,x_{n+1})$ is of class~$\mathcal{C}^n.$ 
\end{enumerate}	
	\end{teoa}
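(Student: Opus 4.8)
The plan is to exploit the fact that the canonical method \eqref{class1} is nothing but a truncated Taylor expansion of the inverse function. Since $s$ is a simple zero, $f'(s)\neq0$ and $f$ is a local diffeomorphism near $s$, so $f^{-1}$ is defined near $0$ and $s=f^{-1}(0)$. Writing the Taylor expansion of $y\mapsto f^{-1}(y)$ about the point $y=f(x)$ and evaluating formally at $y=0$ (that is, at the increment $u=-f(x)$, since $f^{-1}(f(x))=x$) produces exactly the terms defining $H^n_f(x)$. This single observation drives every item. For part (i) I would differentiate the identity $f^{-1}(f(x))=x$ repeatedly: one gets $(f^{-1})'(f(x))=1/f'(x)$ and, inductively, $(f^{-1})^{(i)}(f(x))=Q_i\big(f'(x),\dots,f^{(i)}(x)\big)/(f'(x))^{2i-1}$ for some $Q_i$ with integer coefficients. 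Substituting into \eqref{class1} shows $H^n$ is rational in $x,f(x),\dots,f^{(n-1)}(x)$ with rational coefficients, the only noninteger constants being the factors $1/i!$.

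For part (ii) the clean tool is Lagrange inversion. Put $w=f^{-1}(f(x)+u)-x$, so that $f(x+w)=f(x)+u$. Expanding $f$ at $x$ and dividing by $f'(x)$ turns this into
\[
w+\sum_{k\ge2} y_{k+1}\,w^k=v,\qquad v:=\frac{u}{f'(x)},
\]
because $y_{k+1}=f^{(k)}(x)/(k!\,f'(x))$. Lagrange inversion gives $w=\sum_{i\ge1}P_i\,v^i$ with $P_i=\tfrac1i[w^{i-1}]\big(1+\sum_{k\ge2}y_{k+1}w^{k-1}\big)^{-i}\in\Z[y_3,y_4,\dots]$. Reading off the coefficient of $u^i$ identifies $\tfrac1{i!}(f^{-1})^{(i)}(f(x))=P_i/(f'(x))^i$, so each summand of \eqref{class1} equals $(-1)^iP_i\,y_2^i$ and
\[
H^n_f(x)=y_1+\sum_{i=1}^{n-1}(-1)^iP_i(y_3,\dots,y_{i+1})\,y_2^i,
\]
manifestly an element of $\Z[y_1,\dots,y_n]$. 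The weight bookkeeping in Lagrange's formula (each $y_{k+1}$ carries $w$-degree $k-1\ge1$, and the total must be $i-1$) shows $\deg P_i\le i-1$, so the $i$-th term has total degree $\le 2i-1$ and $y_2$-degree $i$; the extreme case $i=n-1$, realized by the nonzero monomial $y_3^{\,n-2}y_2^{\,n-1}$, yields total degree $2n-3$ and $y_2$-degree $n-1$, exactly as claimed.

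For part (iii) I would apply Taylor's theorem with Lagrange remainder to $g=f^{-1}$ on the segment between $f(x)$ and $0$: since the first $n$ terms of its degree-$(n-1)$ expansion about $f(x)$, evaluated at $0$, sum to $H^n_f(x)$, one obtains $s=H^n_f(x)+\tfrac{1}{n!}g^{(n)}(\xi)(-f(x))^n$ for some $\xi$ between $0$ and $f(x)$, hence
\[
H^n_f(x)-s=\frac{(-1)^{n+1}}{n!}\,(f^{-1})^{(n)}(\xi)\,\big(f(x)\big)^n.
\]
As $x\to s$ we have $f(x)/(x-s)\to f'(s)\neq0$ and $\xi\to0$, so $|H^n_f(x)-s|/|x-s|^n$ tends to $|f'(s)|^n\,|(f^{-1})^{(n)}(0)|/n!$; this is finite (order at least $n$) and nonzero precisely when $(f^{-1})^{(n)}(0)\neq0$ (order exactly $n$). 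The hypothesis $\mathcal{C}^{2n-2}$ comfortably covers the expansions involved: expanding $f^{-1}$ to order $n$ needs $f\in\mathcal{C}^n$, while expanding $H^n_f$ directly, through its highest ingredient $f^{(n-1)}$, to the order $n-1$ required to witness the vanishing of the first $n-1$ derivatives needs $f\in\mathcal{C}^{2n-2}$.

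Part (iv) is the crux and I expect it to be the main obstacle. Given any method $h_f(x)=\mathcal{H}(x,f(x),\dots,f^{(n-1)}(x))$ of order at least $n$, both $h_f(x)-s$ and, by (iii), $H^n_f(x)-s$ are $O((x-s)^n)=O\big((f(x))^n\big)$, since $f$ vanishes to exactly first order at the simple zero. Subtracting, $\mathcal{H}-H^n$ vanishes like $x_2^n$ along every admissible curve, which suggests setting $A:=(\mathcal{H}-H^n)/x_2^n$ and showing it extends continuously across $\{x_2=0\}$. To turn this curvewise vanishing into an honest factorization on an open subset of $\R^{n+1}$, I would use a probing argument: any point $(a_1,\dots,a_{n+1})$ with $a_3\neq0$ is realized as $(x,f(x),\dots,f^{(n-1)}(x))$ at $x=a_1$ by a polynomial $f$ prescribing those derivative values and possessing a nearby simple zero $s$ (by the implicit function theorem, with $s\to a_1$ as $a_2\to0$); letting $a_2\to0$ forces $\mathcal{H}-H^n=O(a_2^n)$ at that point, giving boundedness and hence continuity of $A$. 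The converse is immediate from (iii). The delicate points, and the heart of the proof, are the uniformity needed for the continuity of $A$ and the claim that the composed difference $x_2^nA=\mathcal{H}-H^n$ is of class $\mathcal{C}^n$, which I would extract from the smoothness of $\mathcal{H}$ and $H^n$ together with the order-$n$ vanishing.
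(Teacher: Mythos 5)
Items (i)--(iii) of your proposal are essentially correct, and two of them take routes genuinely different from the paper's. For (i) you do what the paper does: differentiate $f^{-1}(f(x))=x$ inductively to get $(f^{-1})^{(i)}(f(x))=U_{i-1}(f',\dots,f^{(i-1)})/(f'(x))^{2i-1}$. For (ii), however, the paper proves integrality by a careful induction on the recursion $U_i=(U_{i-1})'f'+(1-2i)U_{i-1}f''$, tracking the factorial weights $(2!)^{j_2}\cdots((i-1)!)^{j_{i-1}}$; your Lagrange-inversion argument gets the polynomial structure, the degree bound $\deg P_i\le i-1$, and the extremal monomial $y_3^{n-2}y_2^{n-1}$ (whose coefficient is, up to sign, the Catalan number $C_{n-2}$, matching the coefficients $1,2,5,14,42$ in the paper's Section~\ref{se:nc}) in one stroke --- a slicker route, though you should justify why $P_i=\frac1i[w^{i-1}]\bigl(1+\sum_{k\ge2}y_{k+1}w^{k-1}\bigr)^{-i}$ lies in $\Z[y_3,\dots]$ despite the prefactor $1/i$ (it does, most easily seen by solving the compositional-inverse equations by the triangular recursion rather than by the closed formula). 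For (iii), note that you are not reproducing the paper's proof of (iii) at all: the paper uses the telescoping identity $\bigl(H^n_f\bigr)'=W_{n-1}=\frac{(-1)^{n-1}}{(n-1)!}(f(x))^{n-1}(f^{-1})^{(n)}(f(x))f'(x)$ to get the derivative conditions $\bigl(H^n_f\bigr)^{(i)}(s)=0$ under $\mathcal{C}^{2n-2}$; your Lagrange-remainder argument is instead the paper's proof of its Theorem~\ref{t:equival}, and it in fact yields the stronger conclusion that $\mathcal{C}^n$ suffices (modulo a sentence on local convergence of the iterates, which your one-step bound $|H^n_f(x)-s|\le C|x-s|^n$ supplies).

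In (iv), which you rightly identify as the crux, there is a genuine gap, and you concede it yourself: the continuity of $A=(\mathcal{H}-H^n)/x_2^n$ across the slice $\{x_2=0\}$ is asserted, not proved. Your probing argument, as written, produces information about $\mathcal{H}_f(x)-s$ along curves $x\mapsto(x,f(x),\dots,f^{(n-1)}(x))$ through simple zeros, i.e.\ vanishing in the variable $x-s$; transferring this to an honest factorization by the coordinate $x_2$, uniformly in the remaining variables, is exactly what is missing, and boundedness of $A$ near the slice --- even if you obtained it --- would not give continuity. The paper closes this gap with Lemma~\ref{taylor}: the finite Taylor expansion in the single coordinate carrying the value of $f$, with integral remainder, gives $\mathcal{H}-H^n=\sum_{i=0}^{n-1}x_1^if_i+x_1^nF$ where the $f_i$ are $\mathcal{C}^{n-i}$ and, crucially, $F$ is \emph{continuous by construction} (it is an integral of a continuous derivative), so no uniformity argument is ever needed; the order-$n$ hypothesis is then used only finitely many times, in Proposition~\ref{unici}, where a $\mathcal{C}^\infty$ probe $\varphi$ with a simple zero and prescribed jet at a point where the first nonvanishing $f_j$ is nonzero yields $(G-H)^{(j)}_\varphi(s_0)=j!\,\varphi'(s_0)f_j(s_0,s_2,\dots,s_n)\ne0$, a contradiction that annihilates every $f_i$. (Your curvewise $O((x-s)^n)$ statement is equivalent to killing the $f_j$, since along such a curve $x_2=f(x)\sim f'(s)(x-s)$, but only \emph{after} the expansion of Lemma~\ref{taylor} is in place.) Note also that once this is done, the asserted $\mathcal{C}^n$ regularity of $x_2^nA$ is free, since $x_2^nA=\mathcal{H}-H^n$ is a difference of $\mathcal{C}^n$ functions; the analytic content of (iv) is entirely in the continuous extension of $A$, which your sketch does not establish.
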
	

We remark that in item (iv) when we consider a general method of Class~I we refer to methods of the form $x\to H(x,f(x),f'(x), f''(x), \ldots, f^{(n-1)}(x))$ as is defined in detail in next section.

The above result is classical, see for instance~\cite{KT, ref1,tra}.  Nevertheless, our proof is simple, self-contained and based on a more formal and detailed  definition of Class~I methods. Moreover the regularity of the involved functions is sharpened.

As we have already said, the canonical methods for $n=2,3$ are the celebrated Newton and Chebyshev methods. Notice their compact expressions in terms of their corresponding polynomials, as  described in item~(ii):
\[
P_{\mbox{\scriptsize New}}(y)=y_1-y_2,\quad P_{\mbox{\scriptsize Che}}(y)=y_1-y_2-y_3y_2^2.
\]
For a similar compact expression of the canonical methods for $n\le7$ or for Halley method, see Section~\ref{se:nc} or~\cite{tra}. For completeness we  comment that Chebyshev method is also attributed  to Euler, see~\cite{OG,tra0} and their references for a full discussion about the matter.

In Theorem~\ref{t:equival}, we will give an improvement of statement (iii) of Theorem~\ref{thm:a}. More concretely, we prove that if  $f$ in the above theorem is merely of class~$ \mathcal{C}^n$, the iterative methods of Class I still converge to $s$ with order at least $n$.

Our second result gives a natural family of methods of order $n$ in Class~II that extend Steffensen method. To state it we need to introduce some more notations. For convenience and because of the usual notation of increments  we define $\Delta(x)=g(x)-x.$ Notice that in fact $f=\Delta$ but in many places will be more convenient to use this new name. Similarly, for any $x_0\in\R$ we define $x_m= g^{\circ m}(x_0) $ and associated  to this sequence we introduce for $m\ge0,$
\begin{equation}\label{eq:set}
\Delta_m=\Delta(x_m)= g(x_m)-x_m= x_{m+1}-x_m.
\end{equation}

One of the ideas of the method is that if $s$ is a solution of $f(x)=0,$ then $s$ is a fixed point of $g(x).$  Then some points of the discrete dynamical system generated by $g$ are used to construct the iterative method. 

The set of equations~\eqref{eq:set} can be inverted giving that 
\[
x_m= x_0+ \Delta_0+\Delta_1+\cdots+\Delta_{m-1}.
\]
With these notations in mind, and taking $x_0$ instead of $x,$ the above definition of Class~II methods can be formulated in an equivalent but more suitable way:

\begin{itemize} 
	\item   {\bf Class~II:\,}  Methods that depend on $x_0,\Delta_0, \Delta_1, \Delta_2,$  until  $\Delta_{n-1}.$ 
\end{itemize}

Finally we compute the interpolation polynomial  $P_{n-1}(y)$ of degree $n-1$  of $f^{-1}(y)$ at the $n$ points of Table~\ref{tab1}. Notice that
\[
f(x_0+\Delta_0+\Delta_1+\cdots+\Delta_{m-1})= f(x_m)=g(x_m)-x_m=x_{m+1}-x_m=\Delta_m.
\]

\begin{table}[h]
	\begin{center}
		\begin{tabular}{|c|| c| c|c|c|c|c} 
			\hline
			$y$ & $\Delta_0$&  $\Delta_1$&  $\Delta_2$& $\cdots$ & $\Delta_{n-1}, \, n\ge 2$ \\
			\hline
			$f^{-1}(y)$ & $x_0$& $x_0+\Delta_0$&$x_0+\Delta_0+\Delta_1$& $\cdots$ & $x_0+\Delta_0+\Delta_1+\cdots+\Delta_{n-2}$\\
			\hline
		\end{tabular}
		\caption{The function $f^{-1}$ at $n$ points.}\label{tab1}
	\end{center}
\end{table}

By using the divided differences Newton method   we get that
\[
P_{n-1}(y)=x_0+\sum_{i=1}^{n-1} D_{0,1,\ldots,i} (y-\Delta_0)(y-\Delta_1)\cdots(y-\Delta_{i-1}),
\]
where $D_{0,1,\ldots,m}$ are the usual divided differences, defined recurrently as
\[
D_{0,1,\ldots,m}=\frac{D_{1,\ldots,m}-D_{0,1,\ldots,m-1}}{\Delta_m-\Delta_0},
\]  
with  $D_0=x_0$ and $D_m=x_0+\Delta_0+\Delta_1+\cdots+\Delta_{m-1},$ for $m\ge1.$

Then, it can be seen that
\begin{equation}\label{ste-gen}
P_{n-1}(0)=: K_f^n(x_0)= x_0+\sum_{i=1}^{n-1} \dfrac{Q_{i}(\Delta_0,\Delta_1,\ldots,\Delta_{i})}{R_{i}(\Delta_0,\Delta_1,\ldots,\Delta_{i})}\Delta_0\Delta_1\dots\Delta_{i-1},
\end{equation}
where, for each $m,$ $Q_m$ is a homogeneous polynomial and $R_m$ is the homogeneous polynomial formed by the product of the $m(m+1)/2$ factors $\Delta_j-\Delta_i$ with $0\le i<j\le m.$ In Section~\ref{ste-gen} after the proof on next theorem, we give the expressions of $K_f^n(x_0)$ for low values of $n.$

\begin{teoa}\label{thm:b}  For $n \geq 2$ and any $\mathcal{C}^n$ function $f$ with a simple zero $s$, the method of Class~II, given by the map $x_0 \mapsto K_f^n(x_0)$, where $K_f^n$ is defined in~\eqref{ste-gen} and all expressions involved in this map have been introduced above, is a rational method that converges to $s$ with order $n$.
\end{teoa}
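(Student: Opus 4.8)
The plan is to read off both assertions of the theorem directly from the interpolation description of $K_f^n$ together with the classical Lagrange interpolation remainder, and then to turn the resulting interpolation error into an error bound in powers of $x_0-s$ by controlling the increments $\Delta_m$ through the dynamics of $g$. Since $s$ is a simple zero, $f'(s)\ne0$, so by the inverse function theorem $f$ is a local diffeomorphism near $s$ and $\phi:=f^{-1}$ is a well-defined $\mathcal{C}^n$ function on a neighborhood of $0=f(s)$; this is the only regularity input needed, and it matches the hypothesis exactly. For $x_0$ close to $s$ all the nodes $\Delta_m=f(x_m)$ lie in this neighborhood. The rationality claim needs essentially no new work: formula~\eqref{ste-gen} already exhibits $K_f^n(x_0)$ as $x_0$ plus a sum of ratios $Q_i/R_i$ of polynomials in $\Delta_0,\dots,\Delta_{n-1}$ times products of the $\Delta_j$, and each $\Delta_m=g^{\circ(m+1)}(x_0)-g^{\circ m}(x_0)$ is produced from $x_0$ using only evaluations of $g=f+\mathrm{id}$ and rational operations, with no derivatives of $f$. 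The map is defined precisely where the nodes are pairwise distinct, i.e. where the denominators $R_i=\prod_{0\le k<j\le i}(\Delta_j-\Delta_k)$ are nonzero.

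The core of the argument is the error identity. The polynomial $P_{n-1}$ interpolates $\phi$ at the $n$ distinct nodes $\Delta_0,\dots,\Delta_{n-1}$, so the classical Lagrange interpolation remainder, valid since $\phi\in\mathcal{C}^n$, evaluated at $y=0$ gives
\[
s-K_f^n(x_0)=\phi(0)-P_{n-1}(0)=\frac{(-1)^n(f^{-1})^{(n)}(\xi)}{n!}\,\Delta_0\Delta_1\cdots\Delta_{n-1},
\]
for some $\xi$ in the smallest interval containing $0$ and the nodes. Everything then reduces to estimating the product $\Delta_0\cdots\Delta_{n-1}$, since $(f^{-1})^{(n)}(\xi)\to(f^{-1})^{(n)}(0)$ as $x_0\to s$ by continuity of $(f^{-1})^{(n)}$.

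To bound the product, set $e_m=x_m-s$. Because $g=f+\mathrm{id}$ is $\mathcal{C}^n$ with $g(s)=s$, each iterate map $g^{\circ m}$ is $\mathcal{C}^n$ and fixes $s$, so the mean value theorem gives $e_m=O(x_0-s)$ and hence $\Delta_m=f(x_m)-f(s)=O(x_0-s)$ for every fixed $m$. Multiplying the $n$ factors yields $\Delta_0\cdots\Delta_{n-1}=O\big((x_0-s)^n\big)$, and with the boundedness of $(f^{-1})^{(n)}(\xi)$ we get $|K_f^n(x_0)-s|\le C\,|x_0-s|^n$. For $|x_0-s|$ small and $n\ge2$ this is smaller than $|x_0-s|$, so the method map contracts, the iteration is well defined and converges to $s$ with order at least $n$. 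To pin down the exact order one refines the estimate: from $e_m=g'(s)\,e_{m-1}+O(e_{m-1}^2)$ an easy induction gives $e_m=(g'(s))^m e_0+O(e_0^2)$ and $\Delta_m=f'(s)(g'(s))^m e_0+O(e_0^2)$, whence
\[
K_f^n(x_0)-s=\frac{(-1)^{n+1}(f^{-1})^{(n)}(0)}{n!}\,(f'(s))^n\,(g'(s))^{n(n-1)/2}\,(x_0-s)^n+o\big((x_0-s)^n\big),
\]
so the order is exactly $n$ whenever the leading constant is nonzero, that is for the generic functions with $(f^{-1})^{(n)}(0)\ne0$ and $f'(s)\ne-1$, in agreement with statement (iii) of Theorem~\ref{thm:a}.

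The step I expect to be the main obstacle is guaranteeing that the nodes $\Delta_0,\dots,\Delta_{n-1}$ are genuinely distinct for $x_0$ in a punctured neighborhood of $s$, since otherwise both the stated Lagrange remainder and the very definition of $K_f^n$ via~\eqref{ste-gen} break down. As $f$ is injective near $s$, distinctness of the $\Delta_m$ is equivalent to distinctness of the iterates $x_m=g^{\circ m}(x_0)$, i.e. to $x_0$ not being a periodic point of $g$ of period at most $n-1$. When $g'(s)=1+f'(s)$ is not a root of unity of order $\le n-1$, the fixed point $s$ is an isolated periodic point of each such period, so distinctness holds on a punctured neighborhood; in the resonant cases, where nodes may coalesce, the cleanest remedy is to replace the Lagrange remainder by the Hermite--Genocchi integral representation of the divided differences, which depends continuously on the nodes and thus extends the error identity and the $O\big((x_0-s)^n\big)$ bound by continuity to coincident nodes. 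Once this technical point is settled, the two displayed estimates deliver the order-$n$ convergence and finish the proof.
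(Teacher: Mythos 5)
Your proof is correct, and its core identity is exactly the paper's: the Lagrange remainder for the interpolation of $f^{-1}$ at the nodes $\Delta_0,\dots,\Delta_{n-1}$, evaluated at $y=0$, followed by mean-value estimates along the $g$-orbit to convert $\Delta_0\Delta_1\cdots\Delta_{n-1}$ into a multiple of $(x_0-s)^n$. Where you genuinely diverge is in how convergence is secured: the paper first proves Lemma~\ref{l:ordre2} --- a substantial computation with divided-difference asymptotics showing that the first derivative of $K_f^n$ vanishes at $s$, hence order at least $2$ and local asymptotic stability --- and only then computes $\lim_{m\to\infty}|y_{m+1}-s|/|y_m-s|^n$; you observe instead that the remainder identity already yields $|K_f^n(x_0)-s|\le C|x_0-s|^n$, which for $n\ge2$ is a contraction near $s$, making that lemma dispensable. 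This is a real economy. Your version is also sharper on two points where the paper is loose. First, your leading exponent $(g'(s))^{n(n-1)/2}$ is the correct count: $x_i-s$ carries $i$ factors of $g'$ and $\sum_{i=0}^{n-1}i=n(n-1)/2$, whereas the paper's \eqref{eq:p4} and its final limit have an off-by-one slip giving $|g'(s)|^{n(n+1)/2}$; for $n=2$ your formula reproduces Steffensen's classical constant $\frac{f''(s)}{2f'(s)}\bigl(1+f'(s)\bigr)$, confirming your count. Second, you rightly condition the \emph{exact} order $n$ on the nonvanishing of the constant, i.e. $(f^{-1})^{(n)}(0)\ne0$ and $f'(s)\ne-1$, while the paper asserts its limit is nonzero with no caveat; in the degenerate cases the order is only at least $n$. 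Finally, you flag node distinctness --- needed both for \eqref{ste-gen} to be defined and for the remainder formula --- which the paper passes over silently; your resolution is essentially sound, with one refinement worth noting: since $g'(s)=1+f'(s)$ is real, the only root-of-unity resonance is $g'(s)=-1$, but when $g'(s)=0$ (i.e. $f'(s)=-1$) nodes can also coalesce because $g$ need not be locally injective and an iterate may hit $s$ exactly (a non-periodicity phenomenon, e.g. $f(x)=-x$, where $\Delta_1=\Delta_2=0$); your Hermite--Genocchi confluent extension is the right safety net there too, at the acceptable price that at such $0/0$ points the method is the continuous extension of the rational formula rather than the literal formula itself.
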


\section{Proof of Theorem~\ref{thm:a}}

We start this section with the proof of a preliminary result that affirms that any sufficiently smooth  function defined in a convenient open subset of $\R^n$ can be expanded in powers in one coordinate function. We have chosen to write this result in terms of the first coordinate but clearly it is also valid for any election of coordinate.

\begin{lem} \label {taylor} Let $\mathcal{U}$ be an open subset of $R^{n+1},\,\,n\ge 1,$ satisfying that $(sx_0,x_1,\ldots,x_n)\in \mathcal{U}$ when $(x_0,x_1,\ldots,x_n)\in \mathcal{U}$ and $s\in[0,1].$ Set $g:\mathcal{U}\longrightarrow\R$ be a $\mathcal C^k,\,\,k\ge 1$ function. Then for any $0<m\le k$ we have 
	\begin{align*}
		g(x_0,x_1,\ldots,x_n)&=g_0(x_1,\ldots,x_n)+x_0g_1(x_1,\ldots,x_n)+\ldots+ x_0^{m-1}g_{m-1}(x_1,\ldots,x_n)\\ &\quad +x_0^mh_m(x_0,x_1,\ldots,x_n)
	\end{align*}
where for $i<m,\,\, g_i(x_1,\ldots,x_n)=\frac{\partial^i g}{i!\partial x_0^i}(0,x_1,\ldots,x_n)$ is defined in an open set of $R^n$ and is of class $\mathcal C^{k-i}$ and $h_m$ is defined in $\mathcal{U}$ and is of class $\mathcal C^{k-m}.$
\end{lem}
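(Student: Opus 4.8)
The plan is to reduce the statement to the classical one-variable Taylor expansion with integral remainder, applied in the direction of the first coordinate, and then to read off the regularity of the individual pieces. First I would fix the last $n$ coordinates $(x_1,\ldots,x_n)$ and consider the single-variable function $\phi(u)=g(u,x_1,\ldots,x_n)$. The hypothesis on $\mathcal{U}$ is used precisely here: it guarantees that the whole segment $\{(sx_0,x_1,\ldots,x_n): s\in[0,1]\}$ lies in $\mathcal{U}$ whenever $(x_0,x_1,\ldots,x_n)\in\mathcal{U}$, so $\phi$ is $\mathcal{C}^k$ on an interval containing both $0$ and $x_0$, and the expansion below is meaningful.

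Second, Taylor's theorem with integral remainder gives, for $0<m\le k$,
\[
\phi(x_0)=\sum_{i=0}^{m-1}\frac{\phi^{(i)}(0)}{i!}\,x_0^i+\frac{1}{(m-1)!}\int_0^{x_0}(x_0-\tau)^{m-1}\phi^{(m)}(\tau)\,d\tau.
\]
After the substitution $\tau=tx_0$ the remainder becomes $x_0^m\,h_m(x_0,x_1,\ldots,x_n)$ with
\[
h_m(x_0,x_1,\ldots,x_n)=\frac{1}{(m-1)!}\int_0^1(1-t)^{m-1}\frac{\partial^m g}{\partial x_0^m}(tx_0,x_1,\ldots,x_n)\,dt,
\]
while the coefficients are exactly $\phi^{(i)}(0)/i!=\frac{\partial^i g}{i!\,\partial x_0^i}(0,x_1,\ldots,x_n)=g_i(x_1,\ldots,x_n)$, as required. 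Again the star-shaped hypothesis is what makes the integrand well defined for every $t\in[0,1]$.

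Third, I would verify the regularity claims. The slice $\mathcal{U}_0=\{(x_1,\ldots,x_n):(0,x_1,\ldots,x_n)\in\mathcal{U}\}$ is open in $\R^n$ (it is the preimage of $\mathcal{U}$ under a continuous inclusion) and, by the hypothesis with $s=0$, equals the projection of $\mathcal{U}$ onto the last $n$ coordinates; on it $g_i$ is the restriction to $x_0=0$ of $\partial^i g/\partial x_0^i$, which is $\mathcal{C}^{k-i}$ because $g$ is $\mathcal{C}^k$, so $g_i\in\mathcal{C}^{k-i}$. The delicate part is $h_m$, for which I would invoke differentiation under the integral sign. The integrand
\[
(t,x_0,x_1,\ldots,x_n)\longmapsto (1-t)^{m-1}\frac{\partial^m g}{\partial x_0^m}(tx_0,x_1,\ldots,x_n)
\]
is continuous in $t$ and, as a function of $(x_0,\ldots,x_n)$, is $\mathcal{C}^{k-m}$, being the composition of the $\mathcal{C}^{k-m}$ map $\partial^m g/\partial x_0^m$ with the smooth map $(x_0,\ldots,x_n)\mapsto(tx_0,x_1,\ldots,x_n)$; moreover all its partial derivatives up to order $k-m$ are jointly continuous in $(t,x_0,\ldots,x_n)$ and hence locally bounded on $t\in[0,1]$. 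This permits differentiating up to $k-m$ times through the integral and concluding $h_m\in\mathcal{C}^{k-m}$ on $\mathcal{U}$.

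I expect the differentiation-under-the-integral-sign step for $h_m$ to be the only genuine obstacle; everything else is bookkeeping about domains and restrictions. A clean alternative that avoids quoting the integral form of Taylor's theorem is induction on $m$: the base case $m=1$ is Hadamard's lemma, $g(x_0,\cdot)=g(0,\cdot)+x_0\int_0^1\partial_{x_0}g(tx_0,\cdot)\,dt$, yielding $g_0\in\mathcal{C}^k$ and $h_1\in\mathcal{C}^{k-1}$; the inductive step applies this same one-step factorization to $h_m$ (which is $\mathcal{C}^{k-m}$), producing $h_{m+1}\in\mathcal{C}^{k-m-1}$, after identifying the new constant term $h_m(0,x_1,\ldots,x_n)$ with $g_m(x_1,\ldots,x_n)$ by evaluating $x_0$-derivatives of the expansion at $x_0=0$. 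Either route reduces the whole lemma to Hadamard's lemma together with the smooth dependence on parameters of integrals.
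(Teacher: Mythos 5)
Your proposal is correct, and your primary route is genuinely different from the paper's. The paper proves the lemma by finite induction on $m$ using only the one-step Hadamard factorization: the base case $m=1$ writes $g(x_0,x_1,\ldots,x_n)-g(0,x_1,\ldots,x_n)=x_0\int_0^1 D_1g(sx_0,x_1,\ldots,x_n)\,{\rm d}s$, and the inductive step applies this same factorization to the remainder $h_m$ (which is $\mathcal C^{k-m}$ with $k-m\ge 1$), taking $g_m=h_m(0,x_1,\ldots,x_n)$ and the new integral factor as $h_{m+1}$; the identification $i!\,g_i=\frac{\partial^i g}{\partial x_0^i}(0,x_1,\ldots,x_n)$ is only read off from the expansion at the very end. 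This is exactly the ``clean alternative'' you sketch in your closing paragraph. Your main argument instead invokes the one-variable Taylor formula with integral remainder in the $x_0$-direction, which yields the explicit closed form $h_m(x_0,x_1,\ldots,x_n)=\frac{1}{(m-1)!}\int_0^1(1-t)^{m-1}\frac{\partial^m g}{\partial x_0^m}(tx_0,x_1,\ldots,x_n)\,{\rm d}t$ and identifies the coefficients immediately as $\phi^{(i)}(0)/i!$, at the cost of one heavier step: differentiating $k-m$ times under the integral sign, which you justify correctly via joint continuity of all derivatives of the integrand up to order $k-m$ on $[0,1]$ times compact sets. What each approach buys: yours gives an explicit remainder formula and settles the derivative identification for free, while the paper's induction never needs more than the regularity of a first-order Hadamard integral at each stage; note, however, that the paper's assertion that $h_1$ ``clearly is $\mathcal C^{k-1}$'' silently rests on the very same smooth-dependence-of-parameter-integrals fact you spell out, so the analytic content of the two proofs is ultimately the same. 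Your bookkeeping on domains (the star-shape hypothesis keeping the segment $\{(tx_0,x_1,\ldots,x_n):t\in[0,1]\}$ inside $\mathcal U$, and the openness of the slice $\mathcal U_0$ where the $g_i$ live) is handled correctly and is in fact more explicit than in the paper.
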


\begin{proof}
 We prove the lemma by finite induction over $m.$ For $m=1$ we have 
\begin{align*}  g(x_0,x_1,\ldots,x_n)-g(0,x_1,\ldots,x_n)&=\int_0^1\frac {{\rm d}}{{\rm d}s}g(sx_0,x_1,\ldots,x_n)\,{\rm d}s\\&=x_0\int_0^1D_1g(sx_0,x_1,\ldots, x_n)\,{\rm d}s
\end{align*}	
	and the lemma follows in this case putting $g_0=g(0,x_1\ldots,x_n )$ which is $\mathcal C^k$ and $h_1=\int_0^1D_1g(sx_0,x_1,\ldots, x_n)\,{\rm d}s$ which clearly is $\mathcal C^{k-1}.$

Now assume that the result is proved for some $m<k$ and we prove it for $m+1.$ We get 	\begin{align*}
	g(x_0,x_1,\ldots,x_n)&=g_0(x_1,\ldots,x_n)+x_0g_1(x_1,\ldots,x_n)+\ldots+ x_0^{m-1}g_{m-1}(x_1,\ldots,x_n)\\ &\quad +x_0^mh_m(x_0,x_1,\ldots,x_n)
\end{align*} with $h_m$ of class $\mathcal C^{k-m}$ where $k-m\ge 1.$ Applying the Lemma to $h_m,$ we have
$$h_m=f_0(x_1,\ldots,x_n)+x_0f_1(x_0,x_1,\ldots,x_n)$$ where $f_0$ is of class $\mathcal C^{k-m}$ and $f_1$ is of class $\mathcal C^{k-m-1}.$ Then the lemma follows for $m+1$ putting $g_m=f_0$ and $h_{m+1}=f_1.$ Note that from these expansions it follows that for $i=1,\ldots,k-1,\,\,i!g_i(x_1,\ldots,x_n)=\frac{\partial^i g}{\partial x_0^i}(0,x_1,\ldots,x_n).$
\end{proof}

Before starting our proof of Theorem~\ref{thm:a}, let us  give the precise definition of Class I methods.
\bigskip

\noindent {\bf Class I methods:\,}
For $n\ge 2,$ set $\Pi_n=\R^{n+1}\setminus \{(x_0,x_1,\ldots,x_{n})\in \R^{n+1} \mbox{ such that } x_2\ne 0\}$ and $\Gamma_n= \{(x_0,x_1,\ldots,x_{n})\in\Pi_n\mbox{ such that } x_1=0\}.$ A {\em Class I  method of order~$n$ to approximate simple zeroes of a real function} $f$ is a pair $(H,\mathcal{W}_H)$ where $\mathcal{W}_H$ is an open neighborhood of $\Gamma_n$ in $\Pi_n$ and $H$ is a ${\mathcal C}^{n}$ function $H:\mathcal{W}_n\longrightarrow \R$ satisfying
the following properties. \begin{enumerate} \item[(a)] For all $(x_0,0,\ldots,x_{n})\in\Gamma_n,\,\,H(x_0,0,x_2,\ldots,x_{n})=x_0.$ 
	\item[(b)] For any real ${\mathcal C}^{2n-2}$  function $f,$ the map $H_f(x)=H(x,f(x),f'(x),\ldots,f^{(n-1)}(x)),$ which is well defined in a neighborhood of any simple zero $s$ of $f,$ satisfies  $H_f^{(i)}(s)=0$  for any $i\in \{1,\ldots,n-1\}.$
		\item[(c)] If $(x_0,x_1,\ldots,x_{n})\in \mathcal{W}_n$ then $(x_0,tx_1,\ldots,x_{n})\in \mathcal{W}_n$ for all $t\in[0,1]$.
\end{enumerate}

 We remark that the name  {\it Class I method of order~$n$} is adequate. This so  because, as we will prove in Theorem~\ref{t:equival}, when the introduced functions $H$ are transformed into the corresponding maps $H_f$, and the functions $f$ are of class at least $\mathcal{C}^{n},$ they provide {\it proper methods of order $n.$}

A last preliminary result is:

\begin{propo}\label{unici} Let $G,H$ two methods of Class I of order $n.$  Then
	$$(G-H)(x_0,x_1,\ldots,x_{n})=x_1^{n}F(x_0,x_1,\ldots,x_{n})$$ where $F:\mathcal{W}_G\cap \mathcal{W}_H\longrightarrow \R$ is of class $\mathcal C^0$ and $ x_1^nF$ is of class $\mathcal C^n.$ \end{propo}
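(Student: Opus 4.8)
Set $D=G-H$; this is of class $\mathcal{C}^n$ on the open set $\mathcal{W}_G\cap\mathcal{W}_H$, which is again a neighborhood of $\Gamma_n$ in $\Pi_n$ and still enjoys property~(c). The plan is to apply Lemma~\ref{taylor} with the distinguished coordinate taken to be $x_1$ (the Lemma holds for any coordinate, and property~(c) supplies exactly the star-shapedness in the $x_1$-direction that the Lemma requires), with $m=k=n$. This gives
\[
D(x_0,x_1,\ldots,x_n)=\sum_{i=0}^{n-1} x_1^{\,i}\,D_i(x_0,x_2,\ldots,x_n)+x_1^{\,n}F(x_0,x_1,\ldots,x_n),
\]
where $i!\,D_i=\partial_{x_1}^{i}D\big|_{x_1=0}$ is of class $\mathcal{C}^{n-i}$ and $F$ is of class $\mathcal{C}^0$. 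If all the $D_i$ vanish then $x_1^nF=D$ is of class $\mathcal{C}^n$, which is the assertion. Thus the whole statement reduces to proving
\[
\partial_{x_1}^{i}D\equiv 0 \ \text{ on } \Gamma_n, \qquad i=0,1,\ldots,n-1,
\]
where I use that the slice $\{x_1=0\}\cap(\mathcal{W}_G\cap\mathcal{W}_H)$ lies inside $\Gamma_n$, since $\mathcal{W}_G\cap\mathcal{W}_H\subset\Pi_n=\{x_2\ne0\}$.

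I would establish this vanishing by induction on $i$. The case $i=0$ is immediate from property~(a): on $\Gamma_n$ both $G$ and $H$ equal $x_0$, so $D\equiv0$ there. For the inductive step, assume $\partial_{x_1}^{j}D\equiv0$ on $\Gamma_n$ for all $j<i$, with $1\le i\le n-1$. The key elementary observation is that, since each $\partial_{x_1}^{j}D$ vanishes identically on the relatively open hypersurface $\Gamma_n=\{x_1=0,\ x_2\ne0\}$, so do all of its derivatives in the tangential directions $x_0,x_2,\ldots,x_n$. Consequently every partial derivative $\partial^{\alpha}D$ of total order $|\alpha|\le i$ with $\alpha_1<i$ reduces to a tangential derivative of some $\partial_{x_1}^{j}D$ with $j<i$, and hence vanishes at every point of $\Gamma_n$. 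The only partial of order at most $i$ that may be nonzero on $\Gamma_n$ is therefore $\partial_{x_1}^{i}D$.

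To exploit this, fix an arbitrary $P^*=(x_0^*,0,x_2^*,\ldots,x_n^*)\in\Gamma_n$ and choose the polynomial
\[
f(x)=x_2^*(x-x_0^*)+\frac{x_3^*}{2!}(x-x_0^*)^2+\cdots+\frac{x_n^*}{(n-1)!}(x-x_0^*)^{n-1},
\]
which is of class $\mathcal{C}^{\infty}\subset\mathcal{C}^{2n-2}$ and has $x_0^*$ as a simple zero with $\big(x_0^*,f(x_0^*),f'(x_0^*),\ldots,f^{(n-1)}(x_0^*)\big)=P^*$. Applying property~(b) to $G$ and to $H$ and subtracting gives $D_f^{(i)}(x_0^*)=0$. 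Expanding $D_f^{(i)}=\big(D\circ(x,f,f',\ldots,f^{(n-1)})\big)^{(i)}$ by the multivariate Fa\`a di Bruno formula and evaluating at $x_0^*$, each summand carries a factor $\partial^{\alpha}D(P^*)$ with $|\alpha|\le i$; by the previous paragraph all of these vanish except the one with $\alpha=(0,i,0,\ldots,0)$, whose companion factor is $(f'(x_0^*))^i=(x_2^*)^i$. Hence $D_f^{(i)}(x_0^*)=\partial_{x_1}^{i}D(P^*)\,(x_2^*)^{i}=0$, and since $x_2^*\ne0$ we get $\partial_{x_1}^{i}D(P^*)=0$. As $P^*$ was arbitrary, the induction closes and the proof is complete.

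The only delicate point is the bookkeeping in the Fa\`a di Bruno step: one must verify that, among all terms produced by differentiating the composition $i$ times, the inductive hypothesis annihilates every summand except the pure one $\partial_{x_1}^{i}D\cdot(f')^{i}$. This is precisely what the tangential-vanishing observation secures, because any summand whose $D$-factor differentiates the $x_1$-slot fewer than $i$ times is a tangential derivative of some $\partial_{x_1}^{j}D$ with $j<i$, and such a summand vanishes at $P^*$ regardless of the values of the higher derivatives $f^{(n)}(x_0^*),\ldots$ that the chosen polynomial happens to produce.
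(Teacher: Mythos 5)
Your proof is correct and takes essentially the same route as the paper: both expand $G-H$ in powers of $x_1$ via Lemma~\ref{taylor} (using property (c) for the star-shapedness) and then kill the coefficients $f_0,\ldots,f_{n-1}$ by realizing an arbitrary point of $\Gamma_n$ as the jet $\big(x,f(x),f'(x),\ldots,f^{(n-1)}(x)\big)$ of a smooth test function with a simple zero and invoking properties (a) and (b). The only difference is organizational: the paper argues by contradiction with the \emph{minimal} nonvanishing coefficient $f_j$, so that the $j$-th derivative of $(G-H)_\varphi$ at $s_0$ collapses to the single term $j!\,(\varphi'(s_0))^j f_j(s_0,s_2,\ldots,s_n)$, whereas you run an induction on the derivative order and carry out the same bookkeeping explicitly via Fa\`a di Bruno together with the tangential-vanishing observation, which in effect supplies the ``simple computations'' the paper leaves to the reader.
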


\begin{proof} From Lemma \ref{taylor} we have
	$$(G-H)(x_0,x_1,\ldots,x_{n})=\sum_{i=0}^{n-1}x_1^if_i(x_0,x_2,\ldots,x_{n})+x_1^{n}F(x_0,x_1,\ldots,x_{n})$$ with $f_i$ of class $\mathcal C^{n-i}$ and $F$ of class $\mathcal C^0.$ Suppose, to arrive a contradiction,
	that some of the $f_i$'s are not identically zero and let $j$ be the first integer such that $f_j$ is not identically zero. Set $(s_0,s_2,\ldots,s_{n})$ satisfying $f_j(s_0,s_2,\ldots,s_{n})\ne 0$ and let $\varphi$ be a $\mathcal C^{\infty}$ function having a simple zero at $s_0$ and satisfying that $\varphi^{(i)}(s_0)=s_{i+1}$ for $i=2,\ldots, n-1.$ 
	Simple computations show that $$(G-H)_{\varphi}^{(j)}(s_0)=j!\varphi ' (s_0)f_j(s_0,s_2,\ldots,s_{n})\ne 0$$ which contradicts the fact that $G$ and $H$ are methods of order $n.$ So all the functions $f_i$ are identically zero and the result follows.
\end{proof}

\begin{proof}[Proof of Theorem~\ref{thm:a}]  
(i) Recall that 	
\begin{align*}
	H_f^n(x)&= x+\sum_{i=1}^{n-1}\frac{(-1)^i}{i!}(f(x))^i(f^{-1})^{(i)}(f(x))=H^n(x,f(x),f'(x), f''(x), \ldots, f^{(n-1)}(x)).
\end{align*}
	We will prove by induction that $H^n:\Pi_n\longrightarrow \R$ is a rational function. 	More precisely, we claim that 
\begin{equation}\label{e:claim}
(f^{-1})^{(i)}(f(x))=\frac{U_{i-1}(f'(x),f''(x),\ldots,f^{(i-1)}(x))}{f'(x)^{2i-1}},
\end{equation}
where $U_{i-1}$ is a homogeneous polynomial of degree $i-1.$ Since $(f^{-1})'(f(x))=\frac{1}{f'(x)}$, the claim is true for $i=1$. Now assume that the claim holds for $i$ and we will prove it for $i+1.$ Since $$(f^{-1})^{(i+1)}(f(x))=\frac{((f^{-1})^{(i)}(f(x)))'}{f'(x)}$$ we get 
	
	\begin{align*} (f^{-1})^{(i+1)}(f(x))& =\frac{1}{f'(x)}\left(U_{i-1}(f'(x),f''(x),\ldots,f^{(i-1)}(x))f'(x)^{1-2i}\right)'\\ &=\frac{1}{f'(x)}\left(  U_{i-1}(f'(x),f''(x),\ldots,f^{(i-1)}(x))\right)'f'(x)^{1-2i}+\\ &\quad \frac{1}{f'(x)}\left( (1-2i) U_{i-1}(f'(x),f''(x),\ldots,f^{(i-1)}(x))f'(x)^{-2i}f''(x)\right) \end{align*}
	
	Since, by the chain rule, $\left(U_{i-1}(f'(x),f''(x),\ldots,f^{(i-1)}(x))\right)'$ is still a homogeneous polynomial in the variables $f'(x),f''(x),\ldots,f^{(i)}(x)$ of degree $i-1,$ the claim holds for $i+1$ putting $$U_i= \left(U_{i-1}(f'(x),f''(x),\ldots,f^{(i-1)}(x))\right)'f'(x)+(1-2i) U_{i-1}(f'(x),f''(x),\ldots,f^{(i-1)}(x))f''(x).$$ This ends the proof of the claim and, as a consequence, item (i) holds.
	
\bigskip	
	
(ii)	That $H^n$ is a polynomial in the variables $y_1,\ldots,y_n$ follows directly from the above claim (see Equation \eqref{e:claim}) and the expression of $H^n.$ To show that the coefficients are integers, using the previous notation, we write $$U_{i-1}(f'(x),f''(x),\ldots,f^{(i-1)}(x))=\sum_{j_1+\ldots +j_{i-1}=i-1}a^i_{j_1\ldots j_{i-1}}(f'(x))^{j_1}\cdots(f^{(i-1)}(x))^{j_{i-1}}.$$ Now it suffices to show that for all $i$ we have 
\begin{equation}\label{enters} a^i_{j_1\ldots j_{i-1}}(2!)^{j_2}\cdots((i-1)!)^{j_{i-1}}\in \Z.
\end{equation}

We prove (\ref {enters}) by induction.
For $i=3$ we have 
\begin{align*}H^3(x,f(x),f'(x),f''(x),f^{(3)}(x))&=x-\frac{f(x)}{f'(x)}-\frac{f''(x)}{2(f'(x))^3}(f(x))^2\\ &\quad-\frac{3(f''(x))^2-f^{(3)}(x)f'(x)}{3!(f'(x)^5)}(f(x))^3\end{align*} 	
so with the above notation we get $U_0=1, U_1=-f''(x)/2$ and $a^1_{01}=-1/2,\,\, U_2=f''(x)^2/2!-f^{(3)}(x)f'(x)/3!$ and    $ a^2_{020}=1/2, a^2_{101}=-1/3!.$ Therefore for $i=2,3$  relation (\ref{enters}) holds.	

Now assume that (5) is true for $i\ge 3.$ To show that the relation is also true for $i+1$ we will see that all the monomials appearing in the formal procedure to obtain  $U_{i}$ still satisfies the relation. Note that a monomial, $a^i_{j_1\ldots j_{i-1}}(f'(x))^{j_1}\cdots(f^{(i-1)}(x))^{j_{i-1}},$ appearing in $U_{i-1}$ contributes in $U_i$ with a generic term $$j_ka^i_{j_1\ldots j_{i-1}}(f'(x))^{j_1+1}\cdots(f^{(k)}(x))^{j_k-1}(f^{(k+1)}(x))^{j_{k+1}+1}\cdots(f^{(i-1)}(x))^{j_{i-1}}$$ and also with
$$(1-2i)a^i_{j_1\ldots j_{i-1}}(f'(x))^{j_1}(f''(x))^{j_2+1}\cdots(f^{(i-1)}(x))^{j_{i-1}}.$$

Since by the induction hypothesis  $a^i_{j_1\ldots j_{i-1}}(2!)^{j_2}\cdots((i-1)!)^{j_{i-1}}\in \Z$ we get
\begin{multline*}
	j_ka^i_{j_1\ldots j_{i-1}}(2!)^{j_2}\cdots(k!)^{j_k-1}((k+1)!)^{j_{k+1}+1}\cdots((i-1)!)^{j_{i-1}}\\
	=j_k\frac{(k+1)!}{k!}a^i_{j_1\ldots j_{i-1}}(2!)^{j_2}\cdots(k!)^{j_k}((k+1)!)^{j_{k+1}}\cdots((i-1)!)^{j_{i-1}}\in \Z.
\end{multline*}
and 
$$(1-2i)a^i_{j_1\ldots j_{i-1}}(2!)^{j_2+1}\cdots((i-1)!)^{j_{i-1}}\\
=(1-2i)2a^i_{j_1\ldots j_{i-1}}(2!)^{j_2}\cdots((i-1)!)^{j_{i-1}}\in \Z.$$ This ends the induction to prove relation (\ref {enters}) and the proof of item~(ii).
	
\bigskip
	
(iii) Recall that in this item we assume that $f$ is a class $\mathcal C^{2n-2}$ function. Notice that
\[
\left(\frac{(-1)^i}{i!}(f(x))^i(f^{-1})^{(i)}(f(x))\right)'=W_i(x)-W_{i-1}(x),
\]
where
\[		W_i(x)=\frac{(-1)^i}{i!}(f(x))^i(f^{-1})^{(i+1)}(f(x))f'(x)\quad \mbox{with} \quad W_0(x)=1.
\]
Hence by the telescopic property  of the sum we get that
\[
\big(H_f^n(x)\big)'=1+W_{n-1}(x)-W_0(x)=W_{n-1}(x)=  \frac{(-1)^{n-1}}{(n-1)!}(f(x))^{n-1}(f^{-1})^{(n)}(f(x))f'(x).
\]
Since $f(s)=0,$ we obtain that $\big(H_j^n\big)^{(i)}(s)=0,$ for $i=1,2,\ldots,n-1$ as we wanted to prove. Moreover,
$
\big(H_f^n\big)^{(n)}(s)=(-1)^{n-1}(f^{-1})^{(n)}(0)(f'(s))^n $ is not zero for most functions. Hence item (iii) follows.

(iv) It follows directly from Proposition \ref{unici}.
\end{proof}

\begin{teo}\label{t:equival} 
	Let $H_f(x)=H(x,f(x),f'(x),\ldots,f^{(n-1)}(x))$ be a method of Class I of order~$n,$ and let $f$ be a class $\mathcal C^n$ function  with a simple zero at $s.$ 	Then the iterative method $H_f$ converges to $s$ with order at least $n.$ 
\end{teo}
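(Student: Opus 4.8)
The plan is to reduce an arbitrary Class~I method of order $n$ to the canonical method $H^n$ of Theorem~\ref{thm:a}, to estimate the two resulting pieces separately, and to arrange every estimate so that it uses only $f\in\mathcal{C}^n$. The delicate point, and the reason the statement is not a direct corollary of Theorem~\ref{thm:a}(iii), is the weakening of the regularity hypothesis from $\mathcal{C}^{2n-2}$ to $\mathcal{C}^n$: when $f$ is merely $\mathcal{C}^n$ the last argument $f^{(n-1)}$ is only $\mathcal{C}^1$, so the composed map $H_f$ is in general only $\mathcal{C}^1$ and cannot be Taylor-expanded to order $n$. The whole argument must therefore be organised so that differentiation of order $n$ falls only on $f^{-1}$.

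First I would observe that the canonical method $H^n$ is itself a Class~I method of order $n$: properties (a) and (c) are immediate from its explicit rational form, and property (b) is exactly item (iii) of Theorem~\ref{thm:a}. Hence Proposition~\ref{unici} applies to the pair $(H,H^n)$ and gives
\[
(H-H^n)(x_0,\ldots,x_n)=x_1^{\,n}\,F(x_0,\ldots,x_n),
\]
with $F$ continuous on $\mathcal{W}_H\cap\mathcal{W}_{H^n}$. Evaluating at $(x,f(x),\ldots,f^{(n-1)}(x))$ yields, for $x$ near $s$, the pointwise identity
\[
H_f(x)=H^n_f(x)+(f(x))^{\,n}\,F\big(x,f(x),\ldots,f^{(n-1)}(x)\big).
\]

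Next I would estimate the canonical part using only the regularity of the inverse. Since $s$ is a simple zero and $f\in\mathcal{C}^n$, the inverse function theorem makes $\phi:=f^{-1}$ of class $\mathcal{C}^n$ near $0$. Writing $y=f(x)$ and $x=\phi(y)$, the definition~\eqref{class1} reads $H^n_f(x)=\sum_{i=0}^{n-1}\frac{(-y)^i}{i!}\phi^{(i)}(y)$, which is precisely the degree $n-1$ Taylor polynomial of $\phi$ about $y$ evaluated at $0$. As $s=\phi(0)$, Taylor's theorem with Lagrange remainder gives
\[
s-H^n_f(x)=\frac{(-1)^n}{n!}\,\phi^{(n)}(\xi)\,(f(x))^{\,n}
\]
for some $\xi$ between $0$ and $y$; since $\phi^{(n)}$ is continuous (hence bounded) near $0$ and $|f(x)|\le L\,|x-s|$ near $s$, this bounds $|H^n_f(x)-s|\le C_1|x-s|^n$.

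For the correction term I would use that $F$ is continuous at $p_0=(s,0,f'(s),\ldots,f^{(n-1)}(s))$, which lies in $\Gamma_n$ because $f(s)=0$ and $f'(s)\ne0$; hence $F$ is bounded near $p_0$, and since $(x,f(x),\ldots,f^{(n-1)}(x))\to p_0$ as $x\to s$, the correction term is at most a constant times $|f(x)|^n\le C_2|x-s|^n$. Adding the two bounds gives $|H_f(x)-s|\le C|x-s|^n$ on a neighborhood of $s$. Finally, since $n\ge2$, shrinking the neighborhood so that $C|x-s|^{n-1}<1$ makes $H_f$ map it into itself and forces $|x_{m+1}-s|\le C|x_m-s|^n$, proving convergence to $s$ with order at least $n$. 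The only genuine obstacle is conceptual, namely recognising that one must never differentiate $H_f$ directly: the order-$n$ vanishing of the correction is supplied entirely by the explicit factor $(f(x))^n$ coming from Proposition~\ref{unici} (so mere continuity of $F$ suffices), while the canonical part is controlled by a single Taylor remainder of $\phi=f^{-1}\in\mathcal{C}^n$.
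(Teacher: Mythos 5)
Your proposal is correct, and its backbone coincides with the paper's own proof: both rest on the decomposition $H_f(x)=H^n_f(x)+(f(x))^n\,F_f(x)$ with $F$ merely continuous --- the paper quotes this as item (iv) of Theorem~\ref{thm:a}, whose proof is exactly your step of pairing $H$ with the canonical $H^n$ in Proposition~\ref{unici} --- followed by the Lagrange form of the Taylor remainder of $\phi=f^{-1}\in\mathcal{C}^n$ evaluated at $y=0$, so that all order-$n$ differentiation falls on $f^{-1}$ and never on $H_f$. Where you genuinely deviate is in how convergence of the iterates is secured: the paper asserts that $s$ is a fixed point with $H_f'(s)=0$ and appeals to local asymptotic stability, a step that tacitly needs the vanishing of $H_f'(s)$ for $f$ merely $\mathcal{C}^n$ (property (b) of the Class~I definition is stated only for $\mathcal{C}^{2n-2}$ functions, so it again rests on the decomposition), whereas your contraction argument --- shrink the ball until $C|x-s|^{n-1}<1$, so $H_f$ maps it into itself and the errors decay --- is more elementary, fully self-contained, and consistent with your stated principle of never differentiating $H_f$. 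One small mismatch with the paper's conventions deserves a one-line patch: the paper defines \emph{order at least $n$} through the existence of the limit $\lim_{m\to\infty}|x_{m+1}-s|/|x_m-s|^n$, while your closing inequality only yields $\limsup_{m\to\infty}|x_{m+1}-s|/|x_m-s|^n\le C$. This is cosmetic given what you already have on the page: combining your two displayed identities gives $H_f(x)-s=(f(x))^n\bigl(F_f(x)-\tfrac{(-1)^n}{n!}\phi^{(n)}(\xi_x)\bigr)$ with $\xi_x$ between $0$ and $f(x)$, so the continuity of $F$ at $p_0$, of $\phi^{(n)}$ at $0$, and the fact that $f(x_m)/(x_m-s)\to f'(s)$ show that the limit exists and equals $\bigl|F_f(p_0)-\tfrac{(-1)^n}{n!}\phi^{(n)}(0)\bigr|\,|f'(s)|^n$, which is precisely the value the paper computes.
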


\begin{proof} 
From item (iv) of Theorem~\ref{thm:a} we have that $$ H_f(x)=H^n_f(x)+(f(x))^nA(x,f(x),f'(x),\ldots,f^{(n-1)}(x))$$ for a certain continuous map $A:\Pi_n\longrightarrow \R$ satisfying that $x_1^nA$ is of class $\mathcal C^n.$ Notice that the points in $\Pi_n$ write as $(x_0,x_1,\ldots,x_n)\in\R^{n+1}.$ From now on we will write $A_f(x)=A(x,f(x),f'(x),\ldots,f^{(n-1)}(x)).$  Note that since $H$ is a method of order $n\ge 2$ we have that $s$ is a fixed point of $H_f$ with $H_f'(x)=0.$ So it is locally asymptotically stable and then for $x$ close enough to $s$ the sequence $x_m=H_f^{\circ m}(x)$ converges to $s.$ 
 Define $y=f(x).$ Since $f(s)=0$ and $f'(0)\ne0$ near $y=0,$ then  $x=f^{-1}(y)$ is well-defined and, moreover, $s=f^{-1}(0).$ If $x$ is  near enough to $s$ we know that $f(x)$ it is also close enough to $y=0.$ By using Taylor formula for $f^{-1}$ at $y=f(x)$ we have that
	$$
	f^{-1}(y)= \sum_{i=0}^{n-1}\frac{(f^{-1})^{(i)}(f(x))}{i!} (y-f(x))^i+\frac{(f^{-1})^{(n)}(\eta)}{n!} (y-f(x))^n,
	$$
	where  $\eta=\eta_{y,x}$ is between $y$ and $f(x).$	 By taking $y=0$ above we get
	\begin{align*}
	s &= H_f^n(x)+\frac{(f^{-1})^{(n)}(\mu_x)}{n!} (-f(x))^n\\&=H_f(x)-f(x)^nA_f(x)+\frac{(f^{-1})^{(n)}(\mu_x)}{n!} (-f(x))^n\\
	&=H_f(x)-\Big((-1)^nA_f(x)-\frac{1}{n!}(f^{-1})^{(n)}(\mu_x)\Big)(f(s)-f(x))^n\\&= H_f(x)-\Big((-1)^nA_f(x)-\frac{1}{n!}(f^{-1})^{(n)}(\mu_x)\Big)\big( f'(\nu_x)(s-x)\big)^n
\end{align*}
	where $\mu_x=\eta_{0,x}$ is between $0$ and $f(x),$ and $\nu_x$ is between $s$ and $x,$ and we have also applied the mean value theorem.
	
	Since $x_{m+1}=H^n_f(x_m),$ the above equality gives that
	\begin{align*}
	\lim_{m\to\infty}\frac{|s-x_{m+1}|}{|s-x_m|^n} &= \lim_{m\to\infty}\Big|\Big((-1)^nA_f(x_m)-\frac{1}{n!}(f^{-1})^{(n)}(\mu_{x_m})\Big)(f'(\nu_{x_m}))^n\Big|\\ &=\Big|\Big((-1)^nA_f(s)-\frac{1}{n!}(f^{-1})^{(n)}(0)\Big)(f'(s))^n\Big|
	\end{align*}
   because $A$ is continuous, $f$ is of class $\mathcal C^n$ and $\lim_{m\to\infty}x_m=s$ if $x$ is close enough to $s.$ If this limit is different from zero then $H_f$ is of order $n.$ Otherwise $H_f$ has order at least $n.$	
\end{proof}

\section{Efficiency and some explicit methods in Class~I}\label{se:eff}

In this section we start with some comments about efficiency, including in particular, the celebrated  Kung-Traub's conjecture. We also study the efficiency of some simple methods in the light of this concept and the results of Theorem~\ref{thm:a}.

\subsection{Around the definition of efficiency and Kung-Traub's conjecture}

To understand  the origin of the definition of efficiency $E(p,t)=p^{1/t}$ (or other equivalent ones given below) we reproduce the argument given in~\cite[Sec. 3]{G}. As we have argued in the introduction, $E$ must satisfy the identity  $E(p,t)= E(p^k, tk)$ for all $k\in\N.$ Imposing the same equation, but for all $x\in\R^+$ we get  $E(p,t)= E(p^x, tx).$ Then, making the derivative with respect to $x$ we obtain
\[ p^x
\ln(p)\frac{\partial E}{\partial p}\left(p^x, tx\right)+ t \frac{\partial E}{\partial t}\left(p^x, tx\right)=0.
\]
By replacing $x=1$, we arrive to the linear partial differential  equation
\[
p\ln(p) \frac{\partial E}{\partial p}(p, t)+ t \frac{\partial E}{\partial t}(p, t)=0,
\]
for which all explicit solutions are $E(p,t)=\varphi( \ln(p)/t),$ where $\varphi$ is any smooth function. By taking $\varphi(y)=y, \varphi(y)= y/\ln(2)$ or $\varphi(y)=\exp(y)$ we arrive to three natural, and essentially equivalent, definitions of efficiency, $\ln(p)/t$, $\log_2(p)/t$ or $p^{1/t}.$ The third one is the one chosen in this paper although several people also utilize the second one. Anyway all them are equivalent.  In fact, in \cite{KT2,tra2} it is also commented without proof that $\log_2(p)/t$  is essentially the unique natural definition of efficiency  and this assertion is attributed to Gentleman in a 1970 private communication to the authors.

In any case, and among methods of the same order $p,$ the key point to know the efficiency is how to estimate $t.$ We collect some comments about this question following \cite{GNS,KT,KT2, tra,tra2,var0} and their references.  Following \cite{KT} to compare various algorithms, it is necessary to define efficiency measures based on the speed of
convergence ({\it order $p$}), the cost of evaluating of the involved function and its derivatives ({\it problem cost}), and the cost
of forming the iteration making elementary operations: $+,-,*,/$ ({\it combinatorial cost}). Clearly the combination of the last two  factors leads to $t.$ 

One  of the followed ways is to concentrate in $t$ only the problem cost, simply counting the number of evaluations of $f$ and its derivatives at different points, say $d$. Hence, from this point of view, Newton method would have $d=2$ because at each iteration uses $f(x)$ and $f'(x),$ and then $t=d=2$ and its efficiency would be $2^{1/2}.$ Similarly Chebyshev method would have efficiency $3^{1/3}.$  A more efficient method with this criterion would be Ostrowski method (\cite{O}). It is
\[
x_{n+1}=y_n-\frac{f(y_n)(x_n-y_n)}{f(x_n)-2f(y_n)},\quad \mbox{where}\quad y_n=x_n-\frac{f(x_n)}{f'(x_n)}.
\]
Notice that $d=3$ because it uses $f(x_n), f'(x_n)$ and $f(y_n)$ and it can be easily seen that has order $p=4.$ Hence its efficiency is higher and it is $4^{1/3}.$ In general, methods with~$d$ evaluations and order $2^{d-1},$ for any $d\ge2$ are known, their corresponding efficiency is $2^{(d-1)/d}$ and are called optimal methods, see for instance \cite{var} and its references. The methods where the function~$f$ and its derivatives are evaluated at different points are called {\it multipoint methods.}

In \cite{KT,KT2} it  is stated the nowadays known as Kung--Traub's Conjecture: {\it A multipoint iteration without memory based on $d$ evaluations has optimal order $2^{d-1}.$} 

There are also many papers where in the computation of $t$ the two elements,  problem and combinatorial costs are taken into account, see for instance~\cite{GNS, GNS2,KT2} and their references.

In next section we will study the efficiency of some methods of order $p=n$ inside Class~I. Notice that Theorem~\ref{thm:a} ensures that this is the higher order in this class. Recall that all them use $f,f',f''$ until $f^{(n-1)}$ at the same point, that is, for all them $d=n.$   Hence it has sense to discard this part in the computation of $t,$ because all them use the same values and then $t$ is merely reduced to the value introduced above as combinatorial cost, that only takes into account the number of elementary operations. In all cases, we will use the variables introduced in \eqref{eq:var}.

\subsection{Canonical methods of order at most $7.$}\label{se:nc} 

By using item (ii) of Theorem~\ref{thm:a}, the canonical methods are
\[
H_f^n(x)= H^n(x,f(x),f'(x), f''(x), \ldots, f^{(n-1)}(x))= P^n(y_1,y_2,\ldots,y_{n})=P^n(y),
\]
where $H_f$ is introduced in~\eqref{class1}.
Long but straightforward computations give that the general expression using the variables \eqref{eq:var} is:
\begin{align*}
	P^n(y) &= y_{{1}}-y_{{2}}-y_{{3}}{y_{{2}}}^{2}-\left( {2y_{{3}}}^{2}-y_{{4}} \right){y_{{2}}}^{3}  - \left( 5{y_{{3}}}^{3}-5y_{{3}}y_{{4}}+y_{{5
	}} \right){y_{{2}}}^{4}
	\\ &\quad  -\left( 14{y_{{3}}}^{4}-21{y_{{3}}}^{2}y_{
		{4}}+6y_{{3}}y_{{5}}+3{y_{{4}}}^{2}-y_{{6}} \right)y_{{2}}^{5} \\&\quad  -\left( 42{y_{{3}}}^{5}-{84}{y_{{3}}}^{3}y_
	{{4}}+28{y_{{3}}}^{2}y_{{5}}+28y_{{3}}{y_{{4}}}^{2}-7y_{{3}}y
	_{{6}}-7y_{{4}}y_{{5}}+y_{{7}} \right) y_{{2}}^{6} \\&\quad+\cdots+ Q^n(y_3,y_4,\ldots,y_{n})y_{{2}}^{n-1},
\end{align*}
for some polynomial $Q^n$ of degree $n-2.$ From the above expression we get explicitly all the canonical methods with order $n\le7.$

\subsection{Some rational methods}
In this section we present some simple rational methods based of Theorem~\ref{thm:a}. More precisely, the formula for the method in the variables $y,$ say $R^n,$ will be a rational expression on these variables with the condition that $R^n-P^n$ has order $n$ in the variable $y_2.$ In order to study their efficiency we will take into account the number of operations that we need to compute the iterates, and more in particular, the number of products and quotients.

For one variable polynomial expressions, it is known that Horner's rule, in which a polynomial is written in nested form:
$$a_0+a_1x+a_2x^2+a_3x^3+\cdots +a_nx^n=a_0+x\Big(a_1+x\big(a_2+x(a_3+\cdots+x(a_{n-1}+xa_n)\cdots)\big)\Big),$$
is the approach for computing efficiently the polynomials because it involves less sums and products. We will also use it, with some privileged variable, $y_2$ to compute the numerators and denominators of our rational expressions.

\subsubsection{Examples of order $3$}
Considering the methods 
$$R^3(y)=y_1+\frac{y_2+Ay_2^2}{-1+ay_2},$$
with $A,a$ polynomials in $y_3,$ the condition $R^3(y)-P^3(y)=O(y_2^3)$ implies that $A+a=y_3.$ Hence, the simplest example is when $A=0$ and $a=y_3,$ which provides the celebrated Halley's method
\begin{equation}\label{M}
	R^3(y)=y_1+\frac{y_2}{-1+y_3y_2}.
\end{equation}

Once $y_1$ and $y_2$ are computed this method uses 1 multiplication, 1 division and 2 additions/subtractions.  Similarly the Chebyshev method uses 2 multiplications and 2 subtractions. In the light of the computations and explanations of \cite[Tables 1-2]{GNS} in most computers  the division time is longer than the multiplication one and so the Chebyshev method is more efficient that the Halley one.

\subsubsection{Examples of order $4$} 
Consider the family of methods
$$y_1+\frac{y_2+Ay_2^2+By_2^3}{-1+ay_2+by_2^2},$$
where $A,B,a,b$ are polynomials in $y_3,y_4.$
In order to minimize the computations and get simple methods, we will consider three different cases.
\begin{enumerate}
	\item [(i)] Consider first $A=B=0.$ Then
	$$R^4_1(y)=y_1+\frac{y_2}{-1+ay_2+by_2^2}.$$ The condition $R_1^4(y)-P^4(y)=O(y_2^4)$ gives
	$a=y_3$ and $b=y_3^2-y_4,$ i.e.,
	\begin{equation}\label{M1}
		R^4_1(y)=y_1+\frac{y_2}{-1+y_3y_2+(y_3^2-y_4)y_2^2}=y_1+\frac{y_2}{-1+y_2(y_3+y_2(y_3^2-y_4))}.
	\end{equation}
	In the last equality we write the denominator in Horner's form. This method is the method $\psi_{1,2}$ which appears in \cite[p. 90]{tra}.
	\item [(ii)] Consider $b=0$ with $a\ne 0.$ Then $$R^4_2(y)=y_1+\frac{y_2+Ay_2^2+By_2^3}{-1+ay_2}.$$ Now the condition $R_2^4(y)-P^4(y)=O(y_2^4)$ gives
	$A=y_3-a$ and $B=-ay_3+2y_3^2-y_4.$ Hence, to simplify $B$ it is convenient take $a=2y_3$ and then $B=-y_4.$ Then,
	\begin{equation}\label{M2}
		y_1+\frac{y_2-y_3y_2^2-y_4y_2^3}{-1+2y_2y_3}=y_1+\frac{y_2(1-y_2(y_3+y_4y_2))}{-1+2y_3y_2}.
	\end{equation}
	\item[(iii)] Finally consider $B=0$ with $A\ne 0.$ Then $$R^4_3(y)=y_1+\frac{y_2+Ay_2^2}{-1+ay_2+by_2^2}.$$ Now it is necessary that $A=-a+y_3$ and $b=-ay_3+2y_3^2-y_4.$ If we take  $a=2y_3$ then $A=-y_3$ and $b=-y_4$ and
	\begin{equation}\label{M3}
		R_3^4(y)=y_1+\frac{y_2-y_3y_2^2}{-1+2y_3y_2-y_4y_2^2}=y_1+\frac{y_2(1+y_3y_2)}{-1+y_2(2y_3-y_4y_2)}.
	\end{equation}
\end{enumerate}
To implement the methods $R^4_1,R^4_2$ and $R^4_3$ given in (\ref{M1}),(\ref{M2}) and (\ref{M3}), once $y_1,y_2,y_3$ and $y_4$ are already computed,  it is necessary to perform three more products and one quotient, four products and one quotient and four products and one quotient respectively, as long as we consider the formulas expressed in the Horner's form. Notice that we do not count the simple operation multiply by $2$ as a multiplication. 

By similar considerations that the ones done in previous section, the more efficient methods of order 4 among the ones considered in this paper are firstly the canonical one (only uses four multiplications written in Horner's form with respect to $y_2$), followed by the one defined by~$R^4_1.$

\subsubsection{Halley's type methods}
From the results of the above two subsections it is very natural to consider  for each order $n,$ a method
$$ Q^n(y)=y_1+ \frac{y_2}{-1+\alpha_1y_2+\alpha_2y_2^2+\cdots +\alpha_{n-2}y_2^{n-2}}$$
where $\alpha_1,\alpha_2,\ldots ,\alpha_{n-2}$ are polynomials in $y_3,y_4,\ldots ,y_n$ determined by the condition $Q^n(y)-P^n(y)= O(y_2^n)$. We call these methods, Halley's type methods.
It is easy to see that for each order $n$ the polynomials $\alpha_1,\alpha_2,\ldots ,\alpha_{n-2}$ are uniquely determined. Calling $R^n$ the polynomial $-1+\alpha_1y_2+\alpha_2y2^2+\cdots +\alpha_{n-2}y_2^{n-2}$ the first  ones are:
\begin{align*}
	R^3(y)&=-1+y_3y_2,\\
	R^4(y)&=-1+y_2(y_3+y_2(y_3^2-y_4)),\\
	R^5(y)&=-1+y_2 \Big(y_3+y_2\big(y_3^2-y_4+y_2(2y_3^3-3y_3y_4+y_5)\big)\Big),\\
	R^6(y)&=-1+y_2\Big(y_3+y_2\big(y_3^2-y_4+y_2(2y_3^3-3y_3y_4+y_5\\&\quad+y_2(5y_3^4-10y_3^2y_4+4y_3y_5+2y_4^2-y_6))\big)\Big),
\end{align*}
where, obviously the first ones coincide with Halley and  $R_1^4(y)$ given in~\eqref{M1}.

\section{Proof of Theorem~\ref{thm:b}}

As we will see,  the proof of Theorem~\ref{thm:b} is based on proving first that the order of the involved methods is at least $2.$ This is proved in next preliminary result.

\begin{lem}\label{l:ordre2}  When $f$ is a  class $\mathcal{C}^2$ function, the canonical method $K^n_f$ given in~\eqref{ste-gen} is a rational method of order at least $2$
\end{lem}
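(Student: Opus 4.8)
The plan is to analyze the structure of $K^n_f(x_0)$ given in~\eqref{ste-gen} and show that, near a simple zero $s$, the first-order term of its error vanishes while the second-order term is generically controlled. The key observation is that $K^n_f(x_0)=P_{n-1}(0)$, where $P_{n-1}$ is the degree $n-1$ interpolation polynomial of $f^{-1}$ at the nodes $\Delta_0,\Delta_1,\ldots,\Delta_{n-1}$ listed in Table~\ref{tab1}. Since $s=f^{-1}(0)$, the error $K^n_f(x_0)-s=P_{n-1}(0)-f^{-1}(0)$ is precisely the interpolation error of $f^{-1}$ evaluated at $y=0$. First I would use the standard Newton form of the interpolation remainder: for a function $\varphi$ sufficiently smooth, the error at a point $y$ satisfies $\varphi(y)-P_{n-1}(y)=\varphi[\Delta_0,\ldots,\Delta_{n-1},y]\,(y-\Delta_0)\cdots(y-\Delta_{n-1})$, where the bracket is a divided difference. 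Evaluating at $y=0$ gives
\[
K^n_f(x_0)-s=-(f^{-1})[\Delta_0,\ldots,\Delta_{n-1},0]\,\Delta_0\Delta_1\cdots\Delta_{n-1}.
\]

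Next I would control the size of the product $\Delta_0\Delta_1\cdots\Delta_{n-1}$ in terms of $x_0-s$. Recall $\Delta_m=f(x_m)$ and $x_{m+1}=g(x_m)=x_m+f(x_m)$. Since $s$ is a simple zero, $g(s)=s$ and $g'(s)=1+f'(s)$. The crucial point is that each $\Delta_m=f(x_m)$ vanishes to first order at $x_m=s$, so $\Delta_m=f'(s)(x_m-s)+O((x_m-s)^2)$; in particular $\Delta_m=O(x_m-s)$. Because each application of $g$ keeps $x_m$ within a bounded factor of $s$ near the fixed point (for $x_0$ close enough to $s$ the iterates stay in a neighborhood where $|x_m-s|\le C|x_0-s|$), every factor $\Delta_m$ is $O(x_0-s)$. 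Hence the product of the $n$ factors $\Delta_0\cdots\Delta_{n-1}$ is $O((x_0-s)^n)$, which for $n\ge 2$ is already $O((x_0-s)^2)$. Combined with the boundedness of the divided-difference factor (which I would justify using that $f^{-1}$ is $\mathcal{C}^2$ near $0$, so for merely $\mathcal{C}^2$ data the relevant low-order divided differences of $f^{-1}$ stay bounded as the nodes collapse to $0$), this yields $|K^n_f(x_0)-s|=O((x_0-s)^2)$, i.e. order at least $2$.

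The main obstacle, and the step demanding the most care, is the regularity bookkeeping: the divided difference $(f^{-1})[\Delta_0,\ldots,\Delta_{n-1},0]$ involves $n+1$ nodes and would normally require $f^{-1}\in\mathcal{C}^n$ to remain bounded as the nodes coalesce. Under the hypothesis $f\in\mathcal{C}^2$ only, I cannot directly invoke the full $n$-th order remainder formula. The resolution is that I do not need the full divided difference to stay bounded; I only need the product structure to produce two powers of $x_0-s$. Concretely, I would rewrite the error not via the $(n+1)$-node remainder but by grouping: use that the interpolation error of a $\mathcal{C}^2$ function already picks up the factor $(0-\Delta_0)(0-\Delta_1)=\Delta_0\Delta_1$ from the two nodes $\Delta_0,\Delta_1$, each of which is $O(x_0-s)$, while the remaining dependence on the extra nodes contributes only bounded rational factors (this is exactly the content of the $R_i,Q_i$ structure in~\eqref{ste-gen}, where the denominators $R_i$ are products of differences $\Delta_j-\Delta_i$ that match the numerators' homogeneity). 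I would therefore argue at the level of the explicit expansion~\eqref{ste-gen}, showing that the coefficient of the first-order-in-$(x_0-s)$ part of $\sum_i \frac{Q_i}{R_i}\Delta_0\cdots\Delta_{i-1}$ reconstructs exactly the linear Taylor approximation $s+(f^{-1})'(0)\cdot(\text{leading part})$ and hence cancels $x_0-s$ to first order, leaving a genuine second-order remainder. Verifying that the homogeneous polynomials $Q_i/R_i$ remain bounded as $x_0\to s$ (equivalently, that the divided differences of $x_m$ as a function of the nodes are well defined in the limit) is the delicate technical heart, and it relies only on $f\in\mathcal{C}^2$ through the boundedness of the first- and second-order behavior of $f$ near $s$.
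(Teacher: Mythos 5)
Your overall strategy---extract at least two factors $\Delta_m=O(x_0-s)$ from the error of $K^n_f$ after cancelling its first-order part---is sound, and your scaling estimates for the nodes agree with the paper's Eq.~\eqref{e:delta1}. But the step you yourself single out as ``the delicate technical heart'' is false as stated: the quotients $Q_i/R_i$ do \emph{not} remain bounded as $x_0\to s$ when $f$ is merely $\mathcal{C}^2$ and $i\ge 3$. Indeed, $Q_i/R_i=(-1)^i D_{0,1,\ldots,i}$ is an $i$-th order divided difference of $f^{-1}$ at $i+1$ nodes that all collapse to $0$ at scale $x\sim x_0-s$, and boundedness of such a divided difference in the coalescing limit genuinely requires control of $(f^{-1})^{(i)}$, not just of the ``first- and second-order behavior of $f$ near $s$.'' A concrete counterexample: take $f^{-1}(y)=y+|y|^{5/2}$, which is $\mathcal{C}^2$ but not $\mathcal{C}^3$, so that $f$ is $\mathcal{C}^2$ with a simple zero at $0$. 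Since divided differences annihilate the linear part, $D_{0,1,2,3}$ equals the third divided difference of $|y|^{5/2}$ at nodes of size comparable to $x$ with separations of order $x$, and this grows like $x^{-1/2}\to\infty$. So your resolution of the regularity obstacle cannot be carried out; moreover, your appeal to the first-order coefficient ``reconstructing the linear Taylor approximation'' names the needed cancellation but supplies no mechanism for it.

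Two correct mechanisms exist, and the paper uses one of them. What is actually true is the weaker estimate $Q_i/R_i=o(x^{2-i})$ for $i\ge3$: an $i$-th order divided difference annihilates polynomials of degree $\le 2$, so subtracting the second-order Taylor polynomial of $f^{-1}$ at $0$ (Peano form, available for $\mathcal{C}^2$) and using that the separations $|\Delta_j-\Delta_l|$ are of order $x$ (when $a=1+f'(s)\notin\{0,\pm1\}$, so that $a^j\ne a^l$) yields the bound; multiplied by $\Delta_0\cdots\Delta_{i-1}=O(x^i)$, each term is $o(x^2)$, while $i=2$ is handled by the mean value theorem for second divided differences. The paper instead avoids the $(n+1)$-node remainder formula entirely (it reserves that for Theorem~B, where $f\in\mathcal{C}^n$ is assumed) and proves $(K^n_f)'(s)=0$ in two steps: an induction showing each divided difference scales as $d_i(a)x^{1-i}+o(x^{1-i})$ with a leading coefficient depending \emph{only} on $a=g'(s)$ (Eq.~\eqref{e:4}), and then the key observation that for linear $f$ the interpolation of $f^{-1}$ is exact, forcing $d_i(a)=0$ for $i\ge2$; universality of $d_i(a)$ then kills the first-order contribution of every term for all $\mathcal{C}^2$ functions. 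Note that even after this cancellation the paper's estimate is only $Q_i/R_i=o(x^{1-i})$, still compatible with blow-up for $i\ge3$---confirming that boundedness was never the right intermediate claim, and that either the linear-$f$ trick or the Taylor-annihilation argument above is what your proposal is missing.
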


\begin{proof} Without loss of generality, we take $s=0$. Set $g\in\mathcal{C}^2(\mathcal{U})$, where $0\in\mathcal{U}$ is an open set, then   $g(x)=ax+o(x)$, assume that $a\neq 1$. Our objective is to prove that
$(K^n_f)'(0)=0$. The proof will have two steps. In the first one, we will prove that for the method given in Eq. \eqref{ste-gen}, $K_f^n(g(x))=\kappa(a)x+o(x)$. In the second step we will prove that $\kappa(a)=0$.

  A simple computation gives:
\begin{equation}\label{e:delta1}
\left.\Delta_{i}\right|_{g(x)}=(a-1)a^ix+o(x)\quad \mbox{and} \quad
\left.\left(\Delta_{j}-\Delta_{i}\right)\right|_{g(x)}=(a-1)(a^j-a^i)x+o(x),
\end{equation}

\emph{Step 1}. Using Equations \eqref{e:delta1}  it is straightforward to obtain that: 
\begin{equation}\label{e:2bis}
\left.\left(x+\frac{Q_1(\Delta_0)}{R_1(\Delta_0,\Delta_1)}\Delta_0\right)\right|_{g(x)}=
\left.\left(x+\frac{\Delta_0^2}{\Delta_1-\Delta_0}\right)\right|_{g(x)}=o(x^2).
\end{equation}

Now, we claim that
\begin{equation}\label{e:4}
\left.\left(\frac{Q_i(\Delta_0,\ldots,\Delta_{i-1})}{R_i(\Delta_0,\ldots,\Delta_i)}\right)\right|_{g(x)}=d_i(a)\,x^{1-i}+o(x^{1-i}).
\end{equation}
Indeed, notice that for all $i\in\{1,\ldots,n\}$, 
$
{Q_i(\Delta_0,\ldots,\Delta_{i-1})}/{R_i(\Delta_0,\ldots,\Delta_i)}=(-1)^iD_{0,1,\ldots, i},
$
where $D_{0,1,\ldots, i}$ is the corresponding divided difference appearing in the interpolating polynomial of $f^{-1}(y)$.  The claim follows from the fact that for any divided difference 
$$
D_{i_1,\ldots,i_k}|_{g(x)}=d_{i_1,\ldots,i_k}|_{g(x)} x^{1-k}+o(x^{1-k}).
$$
To prove it, we proceed by induction. A computation shows that 
$$
D_{0,1}|_{g(x)}=\left.\left(\frac{\Delta_0}{\Delta_1-\Delta_0}\right)\right|_{g(x)}=\frac{1}{a-1}+o(1) \mbox{ and } D_{1,2}|_{g(x)}=\left.\left(\frac{\Delta_1}{\Delta_2-\Delta_1}\right)\right|_{g(x)}=\frac{1}{a-1}+o(1).
$$
Suppose that the claim is true for any divided difference of order $k-1$. Then $$\left(D_{i_2,\ldots,i_k}-D_{i_1,\ldots,i_{k-1}}\right)|_{g(x)}=d(a)x^{2-k}+o(x^{2-k}).$$ Using Eq. \eqref{e:delta1} we obtain

\begin{align*}
D_{i_1,\ldots,i_k}|_{g(x)}&=\left.\left(\frac{D_{i_2,\ldots,i_k}-D_{i_1,\ldots,i_{k-1}}}{\Delta_{i_k}-\Delta_{i_1}}\right)\right|_{g(x)}
 =\frac{d(a)x^{2-k}+o(x^{2-k})}{(a-1)(a^{i_k}-a^{i_1})x+o(x)} \\
 &  =\frac{d(a)}{(a-1)(a^{i_k}-a^{i_1})}x^{1-k}+ o(x^{1-k}),
\end{align*}
which proves the claim.

Since, from Eq. \eqref{e:delta1} we have $\Delta_0\ldots \Delta_{i-1}|_{g(x)}=\delta_i(a)x^i+o(x^i)$ with $\delta_i(a)\neq 0$, from Eq.  \eqref{e:4} we have that for all $i\geq 2$
\begin{equation}\label{e:5}
\left.\left(\frac{Q_i(\Delta_0,\ldots,\Delta_{i-1})}{R_i(\Delta_0,\ldots,\Delta_i)}\Delta_0\ldots \Delta_{i-1}\right)\right|_{g(x)}=\alpha_i(a)\,x+o(x).
\end{equation} where $\alpha_i(a)=d_i(a)\delta_i(a)$. This ends the first step of the proof.

\emph{Step 2:} Observe that if $f$ is a linear function, so it is the interpolating polynomial of $f^{-1}$, and $g(x)=ax$. Hence
$$
D_{0,\ldots,i}|_{g(x)=ax}=(-1)^i \left.\left(\frac{Q_i(\Delta_0,\ldots,\Delta_{i-1})}{R_i(\Delta_0,\ldots,\Delta_i)}\right)\right|_{g(x)=ax}=0 \mbox{ for all } i\geq 2,
$$ 
hence $d_i(a)=0$ for all $i \geq 2$ and, therefore, when $g(x)=ax+o(x)$
$$
\left.\left(\frac{Q_i(\Delta_0,\ldots,\Delta_{i-1})}{R_i(\Delta_0,\ldots,\Delta_i)
}\Delta_0\ldots \Delta_{i-1}\right)\right|_{g(x)=ax+o(x)}=o(x^2) \mbox{ for all } i\geq 2,
$$ which, together with the result in Eq. \eqref{e:2bis}, gives
$$
K_f^n(g(x))=o(x^2),
$$
with which we conclude the proof.\end{proof}

\begin{proof} [Proof of Theorem~\ref{thm:b}]  For $n\ge 2$ set $\Sigma_n=\R^{n+1}\setminus \{(y_0,y_1,\ldots,y_{n})\in \R^{n+1} \mbox{ such that } y_i\ne y_j\mbox{ for any }  i,j>0,\, i\ne j\}.$
	A Class II method of order~$n$ to approximate a simple zero $s$ of a real function $f$   is given by a ${\mathcal C}^{n}$ function $K:\Sigma_n\longrightarrow \R.$
Directly from Eq. \eqref{ste-gen}, it follows that
this function $K:\Sigma_n\longrightarrow \R$  which defines $K_f^n$ is a rational function. To prove that it has order $n$ instead of using the characterization based on proving that all their derivatives at $s$ of order smaller than $n$ vanish, we will proceed directly from the definition of order. 

	Given some real numbers $c_1,c_2,\ldots ,c_j$ we denote by $\langle c_1,c_2,\ldots ,c_j\rangle$ the smallest interval containing all them.

	The first step is to utilize the error interpolation formula for $f^{-1}(y)$ by using the~$n$ points in Table~\ref{tab1}. We know that
	\[
	f^{-1}(y)-P_{n-1}(y)= \frac{(f^{-1})^{(n)}(\eta_y)}{n!}(y-\Delta_0)(y-\Delta_1)\cdots(y-\Delta_{n-1}),
	\]
	where $\eta_y\in\langle y,\Delta_0,\Delta_1,\ldots,\Delta_{n-1}\rangle,$ see for instance~\cite{SB}. By applying it to $y=0,$ 
		\begin{equation}\label{eq:p1}
	s-K_f^n(x_0)= \frac{(-1)^n(f^{-1})^{(n)}(\eta_0)}{n!}\Delta_0\Delta_1\cdots\Delta_{n-1}.
	\end{equation}
	Let us study with more detail the terms of the right hand side of the above equality.
	
	First notice that
$x_i-s=g(x_{i-1})-g(s)=g'(\rho_{i-1})(x_{i-1}-s),$ with $\rho_{i-1}\in\langle x_{i-1},s\rangle.$ Hence, for all $i\ge 1,$
	\begin{equation}\label{eq:p2}
	x_i-s = g'(\rho_{i-1})g'(\rho_{i-2})\cdots g'(\rho_{0}) (x_0-s),
\end{equation}
with $\rho_{i-1},\rho_{i-2},\dots,\rho_{0}\in \langle x_0,x_1,\ldots, x_{i-1,}s\rangle.$  Moreover, for $i\ge0,$
	\begin{equation}\label{eq:p3}
\Delta_i=x_{i+1}-x_i=g(x_i)-x_i=f(x_i)=f(x_i)-f(s)=f'(\nu_i)(x_i-s),
\end{equation}
with $\nu_i\in\langle x_i,s\rangle.$ By joining~\eqref{eq:p2} and~\eqref{eq:p3} we get that
	\begin{equation}\label{eq:p4}
\Delta_0\Delta_1\cdots\Delta_{n-1}=g'(\rho_{n-1})(g'(\rho_{n-2}))^2\cdots (g'(\rho_{1}))^{n-1} (g'(\rho_{0}))^n\prod_{i=0}^{n-1}f'(\nu_i) (x_0-s)^n.
\end{equation}

 By Lemma \ref{l:ordre2} we know that $K_f^n$ is a  method of order at least 2, hence convergent. Let us  define  $y_0=x_0$ and $\{y_m\}$ as the sequence obtained by iterating this initial condition by~$K_f^n.$  By repeating the above procedure for the step that provides $y_m$ from $y_{m-1}$ and using
 that $\lim_{m\to\infty} y_m=0$   we obtain that
\begin{align*}
&\lim_{m\to\infty} \frac{|y_{m+1}-s|}{|y_m-s|^n} =\lim_{m\to\infty} \frac{|K_f^n(y_m)  -s|}{|y_m-s|^n}\\
&=\lim_{m\to\infty} \frac{|(f^{-1})^{(n)}(\eta_
	{0,m})|}{n!}\Big|g'(\rho_{n-1,m})(g'(\rho_{n-2,m}))^2\cdots (g'(\rho_{1,m}))^{n-1} (g'(\rho_{0,m}))^n\prod_{i=0}^{n-1}f'(\nu_{i,m})\Big|\\&=
\frac{|(f^{-1})^{(n)}(0)|}{n!}|g'(s)|^{n(n+1)/2}|f'(s)|^n\ne0,
\end{align*}
where we have also used~\eqref{eq:p1} and~\eqref{eq:p4} and that  all the unknown intermediate points tend to $s$ or to $0$ when $m$ goes to infinity. Hence the theorem is proved.
\end{proof}

We end this section giving the interpolation polynomials of Table~\ref{tab1} evaluated at $y=0,$ $P_{n-1}(0)$ for small values of $n.$ According Theorem~\ref{thm:b}, these expressions give rise to methods of order $n$ in Class~II.

Following~\eqref{ste-gen} we know that
\begin{equation*}
	P_{n-1}(0)=: K_f^n(x_0)= x_0+\sum_{i=1}^{n-1} \dfrac{Q_{i}(\Delta_0,\Delta_1,\ldots,\Delta_{i})}{R_{i}(\Delta_0,\Delta_1,\ldots,\Delta_{i})}\Delta_0\Delta_1\dots\Delta_{i-1}.
\end{equation*}

Then, writing $Q_i=Q_{i}(\Delta_0,\Delta_1,\ldots,\Delta_{i}) $ and $R_i=R_{i}(\Delta_0,\Delta_1,\ldots,\Delta_{i}),$ after many routine computations we get that
\begin{align*}
	Q_1 &= -\Delta_0,\\
	Q_2&=\Delta_1^2-\Delta_0\Delta_2,\\
	Q_3&={\Delta_{{0}}}^{2}\Delta_{{1}}\Delta_{{3}}-{\Delta_{{0}}}^{2}{\Delta_{
			{2}}}^{2}-\Delta_{{0}}{\Delta_{{1}}}^{2}\Delta_{{3}}+\Delta_{{0}}{
		\Delta_{{2}}}^{3}+\Delta_{{0}}{\Delta_{{2}}}^{2}\Delta_{{3}}\\&\quad-\Delta_{{0
	}}\Delta_{{2}}{\Delta_{{3}}}^{2}+{\Delta_{{1}}}^{3}\Delta_{{2}}-{
		\Delta_{{1}}}^{3}\Delta_{{3}}+{\Delta_{{1}}}^{2}{\Delta_{{3}}}^{2}-
	\Delta_{{1}}{\Delta_{{2}}}^{3},
\end{align*}
\begin{align*}
	Q_4&={\Delta_{{0}}}^{3}{\Delta_{{1}}}^{2}\Delta_{{2}}\Delta_{{4}}-{\Delta_{
			{0}}}^{3}{\Delta_{{1}}}^{2}{\Delta_{{3}}}^{2}-{\Delta_{{0}}}^{3}\Delta
	_{{1}}{\Delta_{{2}}}^{2}\Delta_{{4}}+{\Delta_{{0}}}^{3}\Delta_{{1}}{
		\Delta_{{3}}}^{3}+{\Delta_{{0}}}^{3}\Delta_{{1}}{\Delta_{{3}}}^{2}
	\Delta_{{4}}\\&\quad-{\Delta_{{0}}}^{3}\Delta_{{1}}\Delta_{{3}}{\Delta_{{4}}}^
	{2}+{\Delta_{{0}}}^{3}{\Delta_{{2}}}^{3}\Delta_{{3}}-{\Delta_{{0}}}^{3
	}{\Delta_{{2}}}^{3}\Delta_{{4}}+{\Delta_{{0}}}^{3}{\Delta_{{2}}}^{2}{
		\Delta_{{4}}}^{2}-{\Delta_{{0}}}^{3}\Delta_{{2}}{\Delta_{{3}}}^{3}\\&\quad-{
		\Delta_{{0}}}^{2}{\Delta_{{1}}}^{3}\Delta_{{2}}\Delta_{{4}}+{\Delta_{{0
	}}}^{2}{\Delta_{{1}}}^{3}{\Delta_{{3}}}^{2}+{\Delta_{{0}}}^{2}\Delta_{
		{1}}{\Delta_{{2}}}^{3}\Delta_{{4}}-{\Delta_{{0}}}^{2}\Delta_{{1}}{
		\Delta_{{3}}}^{4}-{\Delta_{{0}}}^{2}\Delta_{{1}}{\Delta_{{3}}}^{3}
	\Delta_{{4}}\\&\quad+{\Delta_{{0}}}^{2}\Delta_{{1}}\Delta_{{3}}{\Delta_{{4}}}^
	{3}-{\Delta_{{0}}}^{2}{\Delta_{{2}}}^{4}\Delta_{{3}}+{\Delta_{{0}}}^{2
	}{\Delta_{{2}}}^{4}\Delta_{{4}}-{\Delta_{{0}}}^{2}{\Delta_{{2}}}^{3}{
		\Delta_{{3}}}^{2}+{\Delta_{{0}}}^{2}{\Delta_{{2}}}^{2}{\Delta_{{3}}}^{
		3}\\&\quad-{\Delta_{{0}}}^{2}{\Delta_{{2}}}^{2}{\Delta_{{4}}}^{3}+{\Delta_{{0}
	}}^{2}\Delta_{{2}}{\Delta_{{3}}}^{4}+\Delta_{{0}}{\Delta_{{1}}}^{3}{
		\Delta_{{2}}}^{2}\Delta_{{4}}-\Delta_{{0}}{\Delta_{{1}}}^{3}{\Delta_{{
				3}}}^{3}-\Delta_{{0}}{\Delta_{{1}}}^{3}{\Delta_{{3}}}^{2}\Delta_{{4}}\\&\quad+
	\Delta_{{0}}{\Delta_{{1}}}^{3}\Delta_{{3}}{\Delta_{{4}}}^{2}-\Delta_{{0
	}}{\Delta_{{1}}}^{2}{\Delta_{{2}}}^{3}\Delta_{{4}}+\Delta_{{0}}{\Delta
		_{{1}}}^{2}{\Delta_{{3}}}^{4}+\Delta_{{0}}{\Delta_{{1}}}^{2}{\Delta_{{
				3}}}^{3}\Delta_{{4}}-\Delta_{{0}}{\Delta_{{1}}}^{2}\Delta_{{3}}{\Delta
		_{{4}}}^{3}\\&\quad+\Delta_{{0}}{\Delta_{{2}}}^{4}{\Delta_{{3}}}^{2}-\Delta_{{0
	}}{\Delta_{{2}}}^{4}{\Delta_{{4}}}^{2}+\Delta_{{0}}{\Delta_{{2}}}^{3}{
		\Delta_{{3}}}^{2}\Delta_{{4}}-\Delta_{{0}}{\Delta_{{2}}}^{3}\Delta_{{3
	}}{\Delta_{{4}}}^{2}+\Delta_{{0}}{\Delta_{{2}}}^{3}{\Delta_{{4}}}^{3}\\&\quad-
	\Delta_{{0}}{\Delta_{{2}}}^{2}{\Delta_{{3}}}^{4}-\Delta_{{0}}{\Delta_{
			{2}}}^{2}{\Delta_{{3}}}^{3}\Delta_{{4}}+\Delta_{{0}}{\Delta_{{2}}}^{2}
	\Delta_{{3}}{\Delta_{{4}}}^{3}+\Delta_{{0}}\Delta_{{2}}{\Delta_{{3}}}^
	{3}{\Delta_{{4}}}^{2}-\Delta_{{0}}\Delta_{{2}}{\Delta_{{3}}}^{2}{
		\Delta_{{4}}}^{3}\\&\quad-{\Delta_{{1}}}^{4}{\Delta_{{2}}}^{2}\Delta_{{3}}+{
		\Delta_{{1}}}^{4}{\Delta_{{2}}}^{2}\Delta_{{4}}+{\Delta_{{1}}}^{4}
	\Delta_{{2}}{\Delta_{{3}}}^{2}-{\Delta_{{1}}}^{4}\Delta_{{2}}{\Delta_{
			{4}}}^{2}-{\Delta_{{1}}}^{4}{\Delta_{{3}}}^{2}\Delta_{{4}}+{\Delta_{{1
	}}}^{4}\Delta_{{3}}{\Delta_{{4}}}^{2}\\&\quad-{\Delta_{{1}}}^{3}{\Delta_{{2}}}
	^{2}{\Delta_{{4}}}^{2}+{\Delta_{{1}}}^{3}\Delta_{{2}}{\Delta_{{4}}}^{3
	}+{\Delta_{{1}}}^{3}{\Delta_{{3}}}^{3}\Delta_{{4}}-{\Delta_{{1}}}^{3}
	\Delta_{{3}}{\Delta_{{4}}}^{3}+{\Delta_{{1}}}^{2}{\Delta_{{2}}}^{4}
	\Delta_{{3}}\\&\quad-{\Delta_{{1}}}^{2}{\Delta_{{2}}}^{4}\Delta_{{4}}+{\Delta_
		{{1}}}^{2}{\Delta_{{2}}}^{3}{\Delta_{{4}}}^{2}-{\Delta_{{1}}}^{2}
	\Delta_{{2}}{\Delta_{{3}}}^{4}-{\Delta_{{1}}}^{2}{\Delta_{{3}}}^{3}{
		\Delta_{{4}}}^{2}\\&\quad+{\Delta_{{1}}}^{2}{\Delta_{{3}}}^{2}{\Delta_{{4}}}^{
		3}-\Delta_{{1}}{\Delta_{{2}}}^{4}{\Delta_{{3}}}^{2}+\Delta_{{1}}{
		\Delta_{{2}}}^{4}{\Delta_{{4}}}^{2}-\Delta_{{1}}{\Delta_{{2}}}^{3}{
		\Delta_{{4}}}^{3}+\Delta_{{1}}{\Delta_{{2}}}^{2}{\Delta_{{3}}}^{4}
\end{align*}
and
\begin{align*}
	R_1&= \Delta_1-\Delta_0,\\
	R_2&=(\Delta_1-\Delta_0) (\Delta_2-\Delta_0) (\Delta_2-\Delta_1), \\
	R_3&=(\Delta_1-\Delta_0) (\Delta_2-\Delta_0) (\Delta_3-\Delta_0) (\Delta_2-\Delta_1) (\Delta_3-\Delta_1) (\Delta_3-\Delta_2),\\
	R_4&=(\Delta_1-\Delta_0) (\Delta_2-\Delta_0) (\Delta_3-\Delta_0)(\Delta_4-\Delta_0) (\Delta_2-\Delta_1) (\Delta_3-\Delta_1)(\Delta_4-\Delta_1)\times\\&\quad\times (\Delta_3-\Delta_2) (\Delta_4-\Delta_2)(\Delta_4-\Delta_3).
\end{align*}

In particular,
\begin{align*}
	K_f^2(x_0)&= x_0-\frac{\Delta_0}{\Delta_1-\Delta_0}\Delta_0=x_0-\frac{(g(x_0)-x_0)^2}{g(g(x_0))-2g(x_0)+x_0},\\
	K_f^3(x_0)&= x_0-\frac{\Delta_0}{\Delta_1-\Delta_0}\Delta_0+ \frac{\Delta_1^2-\Delta_0\Delta_2}{(\Delta_1-\Delta_0) (\Delta_2-\Delta_0) (\Delta_2-\Delta_1)}\Delta_0\Delta_1,
\end{align*}
recovering the Steffensen method ($n=2$) and obtaining a method of order $3$ in Class~II. It is straightforward to write it in terms of $x_0,$ $g(x_0),$ $g(g(x_0)),$ and $g(g(g(x_0))),$ but we skip the details.

\subsection*{Acknowledgments} We thank our friend and colleague Juan Luis Varona for their careful and thoughtful comments and suggestions which help us to improve the presentation of this paper. 
This work is supported by
Ministry of Science and Innovation--State Research Agency of the
Spanish Government through grants PID2022-136613NB-I00   and PID2023-146424NB-I00. It is also supported by the grants 2021-SGR-00113 
and 
2021-SGR-01039 from AGAUR of Generalitat de Catalunya.


\begin{thebibliography}{10}



\bibitem{OG} 
M.~Garc\'{\i}a Olivo and J.M. Guti\'errez. 
Historical notes on the Chebyshev method for solving nonlinear equations. (In Spanish).
 Misc. Mat. 57, (2013) 63--83. 


\bibitem {G} A.~Gasull.
Difference equations everywhere: some motivating examples.
In Recent Progress in Difference Equations, Discrete Dynamical Systems and Applications (Proceedings of ICDEA2017), 129--167. Springer Proceedings in Mathematics \& Statistics, 2019.

\bibitem {GNS} M.~Grau-S\'anchez, A.~Grau and  M.~Noguera.
On the computational efficiency index and some iterative methods for solving systems of nonlinear equations.
J. Comput. Appl. Math. 236, No. 6, (2011) 1259--1266. 


\bibitem {GNS2} M.~Grau-S\'anchez, M.~Noguera and J.L.~ D\'{\i}az-Barrero.
Note on the efficiency of some iterative methods for solving nonlinear equations. SeMA J. 71, (2015) 15--22. 

\bibitem{KT} H.T.~Kung and 	J.F.~Traub. 
Optimal order of one-point and multipoint iteration. 
J. Assoc. Comput. Mach. 21, (1974) 643--651. 

\bibitem{KT2} H.T.~Kung and 	J.F.~Traub. 
 Computational complexity of one-point and multi-point iteration.
Complexity of Comput., Proc. Symp. Appl. Math. vol. 7, New York City 1973, (1974) 149--160.

\bibitem{O} A.M.~Ostrowski. Solution of Equations and Systems of Equations, 2nd ed. Academic Press, New
York, 1966.

\bibitem{R} A.~Ralston. A First Course in Numerical Analysis, McGraw-Hill Book Co., New York, Toronto, London, 1965.

\bibitem{ref1} E.~Schroeder.
Ueber unendlich viele Algorithmen zur Auflösung der Gleichungen (German). 
Clebsch Ann. II (1870) 317--365.  There is an English translation of G. W. Stewart, On Infinitely Many Algorithms for Solving Equations.

\bibitem{SB} J.~Stoer and R.~Bulirsch.
	Introduction to numerical analysis. Transl. from the German by R. Bartels, W. Gautschi, and C. Witzgall. 3rd ed. 
	Texts in Applied Mathematics. 12. New York, NY: Springer. 2002. 
	


\bibitem{tra0} 	J.F. Traub. On a class of iteration formulas and some historical notes.
{\em  Comm.	Assoc. Comp. Mach.} 4 (1961), 276--278.

\bibitem{tra} 	J.F. Traub.
Iterative methods for the solution of equations. 
(Series in Automatic Computation.) Englewood Cliffs: Prentice-Hall, Inc. 1964.


\bibitem{tra2} 	J.F. Traub.
Computational complexity of iterative processes. 
SIAM J. Comput. 1, (1972) 167--179. 

\bibitem{var0} J.L. Varona. Graphic and numerical comparison between iterative methods. Math. Intelligencer 24, No 1 (2002),  37--46.


\bibitem{var} J.L. Varona. An optimal thirty-second-order iterative method for solving nonlinear equations and a conjecture. 
Qual. Theory Dyn. Syst. 21, No. 2, Paper No. 39 (2022), 21 p. 


\end{thebibliography}
\end{document}